\newcommand{\mb}{\mathbb}
\newcommand{\ol}{\overline}
\newcommand{\ra}{\rightarrow}
\newcommand{\ox}{\ol{x}}
\newcommand{\iso}{\approx}
\newcommand{\Gal}{\text{Gal}}
\newcommand{\Z}{\mb{Z}}
\newcommand{\Cl}{\text{Cl}}
\newcommand{\F}{\mb{F}}
\newcommand{\Q}{\mb{Q}}
\newcommand{\<}{\langle}
\renewcommand{\>}{\rangle}
\newcommand{\vare}{\varepsilon}
\renewcommand{\l}{\ell}
\begin{document}

\newtheorem{Theo}{Theorem}
\newtheorem{PTheo}{Pre-Theorem}
\newtheorem{Prop}[Theo]{Proposition}
\newtheorem{Exam}[Theo]{Example}
\newtheorem{Lemma}[Theo]{Lemma}

\newtheorem{Claim}{Claim}
\newtheorem{Cor}[Theo]{Corollary}
\newtheorem{Conj}[Theo]{Conjecture}
\newtheorem*{ToUnderstand}{To Understand}
\newtheorem*{ToProve}{To Prove}
\theoremstyle{definition}
\newtheorem{Remark}[Theo]{Remark}

\theoremstyle{definition}
\newtheorem{Defn}[Theo]{Definition}
\title{A Golod-Shafarevich Equality and $p$-Tower Groups}
\author{Cam McLeman}
\thanks{Pre-reviewed version of \emph{A Golod-Shafarevich equality and $p$-tower groups.}  J. Number Theory  129  (2009),  no. 11, 2808--2819.  MR2549535 (2010i:11166)}
\address{Willamette University, 900 State Street, Collins Science Center.  Salem, OR.  97304}
\email{cmcleman@willamette.edu}
\date{January 17, 2009} 

\begin{abstract}
All current techniques for showing that a number field has an infinite
$p$-class field tower depend on one of various forms of the
Golod-Shafarevich inequality.  Such techniques can also be used to
restrict the types of $p$-groups which can occur as Galois groups of
finite $p$-class field towers.  In the case that the base field is a
quadratic imaginary number field, the theory culminates in showing
that a finite such group must be of one of three possible presentation
types.  By keeping track of the error terms arising in standard proofs
of Golod-Shafarevich type inequalities, we prove a Golod-Shafarevich
equality for analytic pro-$p$-groups.  As an application, we further
work of Skopin (\cite{Sk73}), showing that groups of the third of the
three types mentioned above are necessarily tremendously large.
\end{abstract}
\maketitle 

\section{Introduction}

All current techniques for showing that a number field has an infinite
$p$-class field tower depend on one of various forms of the
Golod-Shafarevich inequality, a purely group-theoretic result relating
(among other invariants) the generator rank $d$ and relation rank $r$
of an analytic pro-$p$-group.  Even relatively weak forms of the
theorem (e.g., the famous inequality $r>\frac{d^2}{4}$) provide the
first examples of fields with infinite $p$-class field towers.  A much
stronger form of the inequality due to Koch (see Remark
\ref{Kochform}) relates finer invariants describing a group's relation
structure.

At the heart of the proofs of these stronger forms is the Fox
differential calculus, which gives rise to a sequence of inequalities
relating various invariants attached to a pro-$p$-group.  Our first
contribution is to introduce and analyze a new set of obstruction
invariants, measuring the extent to which these inequalities fail to
be equalities.  In section 3, we carry out a standard proof of the
Golod-Shafarevich inequality, now with these obstruction invariants in
place.  In conjunction with a theorem of Jennings on dimension factors
of $p$-groups, this gives our principal result, a new
Golod-Shafarevich \emph{equality} (Theorem \ref{GSEQ}).  One immediate
corollary (Corollary \ref{strict}), stemming from lower bounds placed
on the obstruction invariants, is a strict improvement of the stronger
form of the Golod-Shafarevich inequality mentioned above.  A principal
benefit of Theorem \ref{GSEQ} over similar results is that one can
extract information about the \emph{order} of the group in question.
We take advantage of this in Section 4, where we apply the theorem to
the class field tower group over a quadratic imaginary number field.
In this case, the relation structure for non-trivial class field tower
groups is of one of three types (see Theorem \ref{KV}), the last of
which is not known to occur for \emph{any} finite $p$-group.  An
application of Theorem \ref{GSEQ} puts a rather large lower bound on
the order of such a group.

\section{Background}

Let $K$ be a number field, and $p$ a prime number.  Denote by
$K^{(1)}$ the Hilbert $p$-class field of $K$, i.e., the maximal
abelian $p$-extension of $K$ which is unramified at all primes.  Class
field theory tells us that this is a finite Galois extension of $K$
whose Galois group is isomorphic to the $p$-primary part of the ideal
class group of $K$.  Iterating this procedure constructs the $p$-class
field tower over $K$:
\begin{align*}
K=K^{(0)}\subset K^{(1)}\subset K^{(2)}\subset K^{(3)}\subset \cdots,
\end{align*}
where for $i\geq 0$, $K^{(i+1)}$ is the Hilbert $p$-class field of
$K^{(i)}$.  Let $K^{(\infty)}$ denote the union of the fields in the
tower.  An important question, open for most number fields, is whether
or not $K$ admits a finite extension with class number prime to $p$ --
a subtle arithmetic condition which arose, for example, in Kummer's
work on the first case of Fermat's Last Theorem for regular primes.
This embeddability condition is equivalent to the question of whether
or not the $p$-class field tower over $K$ stabilizes, i.e., whether or
not there exists a positive integer $\l:=\l_p(K)$ such that
$K^{(\l)}=K^{(\l+1)}=K^{(\l+2)}=\cdots$.  We call the smallest such
$\l$ the \emph{$p$-tower length} of $K$, and set $\l=\infty$ if no
such integer exists.  Defining the \emph{$p$-tower group over $K$} by
$G_K^\infty:=\Gal(K^{(\infty)}/K)$, the observation that each
extension $K^{(i+1)}/K^{(i)}$ is finite implies that $\l_p(K)$ is
finite if and only if $G_K^\infty$ is.  We will thus turn our
attention to studying the pro-$p$-groups $G_K^\infty$, some of the
``most mysterious objects in algebraic number theory.'' (\cite{Wi93})

The study of such groups remains in the slightly paradoxical
situation that while even the finiteness of $G_K^\infty$ for a given
$K$ is difficult to decide, we have rather detailed information on
other aspects of its structure.  Namely, work of Shafarevich
(\cite{Sh63}) calculates the generator and relation ranks
\begin{align*}
d:=\dim_{\F_p}H^1(G_K^\infty,\F_p)\quad\quad\text{ and }\quad\quad r:=\dim_{\F_p}H^2(G_K^\infty,\F_p)
\end{align*}
in terms of arithmetic information of $K$, and work of Koch
(\cite{Ko69}), Venkov (\cite{KV74}), and more recently Vogel
(\cite{Vo04}), give information on the specific form of those
relations.  The standard, and essentially only, way of demonstrating a
$p$-class field tower to be infinite is by combining these
calculations with (one of various forms of) the Golod-Shafarevich
inequality, which we turn to next.

\begin{Defn}
Let $G$ be a pro-$p$-group and let $\F_p[G]$ be its group ring over
$\F_p$.  The augmentation map $\vare:\F_p[G]\ra \F_p$, given by
$\vare(\sum a_ig_i)=\sum a_i$, is a surjective homomorphism whose
kernel $I$ is called the \emph{augmentation ideal} of $\F_p[G]$ (or
just of $G$).  Define the \emph{$n$-th modular dimension subgroup
$G_n$ of $G$} by
\begin{align*}
G_n=\{g\in G\mid g-1\in I^n(G)\}.
\end{align*}
The filtration $G=G_1\supset G_2\supset\cdots$ of $G$ by its modular
dimension subgroups is called the \emph{Zassenhaus filtration} of $G$.
One checks easily (e.g., \cite{Ko02}, Theorem 7.12) that if $g,h\in
G_n$, then $[g,h]\in G_{n+1}$ and $g^p\in G_{np}\subset G_{n+1}$, so
that the quotients $G_n/G_{n+1}$ are $\F_p$-vector spaces for all $n$.
We define the (modular) \emph{dimension factors} of $G$ by
$a_n(G):=\dim_{\F_p}G_n/G_{n+1}$.
\end{Defn}

If $G$ is a $d$-generated, $r$-related
pro-$p$-group, we call a presentation
\begin{align*}
\xymatrix{1\ar[r]&R\ar[r]&F\ar[r]&G\ar[r]&1}
\end{align*}
\emph{minimal} if $F$ is a free pro-$p$-group on $d$ generators and
$R$ is generated as a normal subgroup of $F$ by $r$ elements.  The
most commonly cited form of the Golod-Shafarevich inequality places
the lower bound $r>\frac{d^2}{4}$ on the number of relations $r$
required to force a $d$-generated pro-$p$-group finite.  A more
refined version observes that relations lying deeper in the Zassenhaus
filtration contribute less to keeping a group finite, and hence more
such relations would be required.

\begin{Theo}[\cite{Ko69}]\label{medgs}
Suppose $1\ra R\ra F\ra G\ra 1$ is a minimal presentation for a
pro-$p$-group $G$, and that $R\subset F_m$.  If $G$ is finite, then
\begin{align*}
r>\frac{d^m(m-1)^{m-1}}{m^m}.
\end{align*}
\end{Theo}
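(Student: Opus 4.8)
The plan is to pass to the completed group algebra $\F_p[[G]]$, where the finiteness of $G$ becomes the statement that a certain Hilbert series is a polynomial, and then to derive a contradiction when $r$ is too small by means of the Golod--Shafarevich inequality, whose (standard, Koch-style) proof I will organize around the Fox differential calculus.

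First I would set $\Lambda := \F_p[[F]]$; since $F$ is free pro-$p$ on generators $g_1,\dots,g_d$, this is the ring of noncommutative formal power series in the variables $x_i := g_i-1$, its augmentation ideal $\I$ is the maximal ideal $(x_1,\dots,x_d)$, the associated graded ring $\gr_\I\Lambda$ is the free associative $\F_p$-algebra on $d$ degree-one generators (so $\dim_{\F_p}\I^n/\I^{n+1}=d^n$), and $\I$ is free of rank $d$ as a left $\Lambda$-module on $x_1,\dots,x_d$. Picking normal generators $\rho_1,\dots,\rho_r$ of $R$, I put $y_j:=\rho_j-1$ and $A:=\F_p[[G]]=\Lambda/\mathfrak r$, with $\mathfrak r$ the closed two-sided ideal generated by the $y_j$; the hypothesis $R\subseteq F_m$ is exactly the statement $y_j\in\I^m$, whence $\mathfrak r\subseteq\I^m$. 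Writing $I:=\I/\mathfrak r$ for the augmentation ideal of $A$, $c_n:=\dim_{\F_p}I^n/I^{n+1}$, and $P(t):=\sum_{n\ge 0}c_n t^n$, the elementary remark that drives the proof is: $G$ is finite iff $A$ is finite-dimensional iff $I$ is nilpotent iff $P(t)$ is a polynomial --- in which case $P$ has nonnegative coefficients with $P(0)=c_0=1$, so $P(t)\ge 1$ for $t\ge 0$, and $P(1)=|G|$.

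The crux is the Golod--Shafarevich inequality
\begin{align*}
P(t)\,\bigl(1-dt+rt^m\bigr)\ \ge\ 1.
\end{align*}
To obtain it I would invoke the Fox calculus. Using $R\triangleleft F$ and $\Lambda=\F_p\oplus\I$ one checks $\mathfrak r=\sum_j\Lambda y_j+\mathfrak r\I$, so the $A$-module $Q:=\mathfrak r/\mathfrak r\I$ is generated by the $r$ images of the $y_j$; moreover $Q$ sits in a short exact sequence $0\to Q\to A\otimes_\Lambda\I\to I\to 0$ with $A\otimes_\Lambda\I\cong A^d$. The Fox fundamental formula $y_j=\sum_i(\partial_i\rho_j)\,x_i$, together with $y_j\in\I^m$ and uniqueness of the expansion of an element of $\I$ along the free basis $x_1,\dots,x_d$, forces $\partial_i\rho_j\in\I^{m-1}$ for all $i,j$; consequently, under $Q\hookrightarrow A^d$ each generator of $Q$ maps to $v_j:=(\overline{\partial_i\rho_j})_i$, which lies in filtration degree $\ge m$ once one declares the standard basis of $A^d$ to sit in degree $1$ (so that $A^d\twoheadrightarrow I$ is filtered with $I$ in its $I$-adic filtration). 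I would then carry out the Hilbert-series bookkeeping: the associated graded of the short exact sequence above (submodule with induced, quotient with quotient filtration) is exact, so $H_Q(t)=(dt-1)P(t)+1$; and a surjection $A^r\twoheadrightarrow Q$ carrying the degree-$m$ generators of $A^r$ to the $v_j$ is filtered, whence all partial sums of $rt^m P(t)-H_Q(t)$ are nonnegative. Combining the two relations and rearranging shows that the partial sums of $P(t)(1-dt+rt^m)$ are all $\ge 1$, and a one-line Abel summation upgrades this, for $t\in(0,1]$, to the displayed inequality. The fiddly comparison of the filtration induced on $Q\subseteq A^d$ with the one pushed forward from $A^r\twoheadrightarrow Q$ is the step I expect to demand the most care.

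It remains to optimize. Assume for contradiction $r\le\dfrac{d^m(m-1)^{m-1}}{m^m}$; note $m\ge 2$ (for a minimal presentation $R\subseteq\Phi(F)=F_2$) and we may assume $d\ge 2$, the remaining cases being immediate. Since $P$ is a polynomial with $P(t)\ge 1$, the inequality above gives $1-dt_0+rt_0^m\ge P(t_0)^{-1}>0$ for every $t_0\in(0,1]$, so it suffices to exhibit one $t_0\in(0,1]$ with $\varphi(t_0)\le 0$, where $\varphi(t):=1-dt+rt^m$. As $\varphi$ is convex on $(0,\infty)$, its minimum over $(0,1]$ is either at the critical point $t_*=(d/(rm))^{1/(m-1)}$, which lies in $(0,1]$ precisely when $r\ge d/m$ and where a short computation gives $\varphi(t_*)=1-\tfrac{d(m-1)}{m}\bigl(\tfrac{d}{rm}\bigr)^{1/(m-1)}$, or (when $r<d/m$) at $t=1$, where $\varphi(1)=1-d+r<1-d/2\le 0$. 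In the first case an elementary algebraic manipulation shows $\varphi(t_*)\le 0$ is equivalent to the assumed bound on $r$, and in the second $\varphi(1)\le 0$ outright. Either way we contradict the positivity just derived, so $r>\dfrac{d^m(m-1)^{m-1}}{m^m}$; for $m=2$ this is the classical $r>d^2/4$.
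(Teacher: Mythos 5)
Your proposal is correct and follows essentially the same route as the paper: Koch's Fox-calculus argument via the exact sequence $\bigoplus_r\F_p[G]\to\bigoplus_d\F_p[G]\to\F_p[G]\to\F_p\to 0$, the resulting Hilbert-series inequality $(1-dt+rt^m)\sum c_nt^n\geq$ (positive), and evaluation at the critical point $t_*=(d/(rm))^{1/(m-1)}$, exactly as in Remark \ref{Kochform} (the paper just carries along the error terms $e_n$ that you discard). Your explicit handling of the case $t_*>1$ (where one evaluates at $t=1$ instead) is a small point of extra care that the paper's remark elides.
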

\begin{Remark}
The bound $r>\frac{d^2}{4}$ now follows from the observation that one
has $r_1=0$ for a minimal presentation, since a relation in level one
would kill off a generator, contradicting that $G$ was minimally
$d$-generated.
\end{Remark}

The results referenced above combine with the Golod-Shafarevich
equality to give a particularly strong answer in the case that the
base field $K$ is a quadratic imaginary number field.  Let
$d_p\Cl(K):=\dim_{\F_p}\Cl(K)/p$ be the $p$-rank of the class group of
$K$.  The calculation of Shafarevich (\cite{Sh63}, Theorem 1) gives
that $r(G_K^\infty)=d(G_K^\infty)=d_p\Cl(K)$, and a result of Koch and
Venkov (\cite{KV74}, Theorem 2) uses the fact that $G_K^\infty$ is a
so-called Schur-$\sigma$ group to conclude that $R\subset F_3$.  The
Golod-Shafarevich inequality gives in this case that
\begin{align*}
d>\frac{4d^3}{27}.
\end{align*}
Since this inequality is violated for $d\geq 3$, we find that $K$ has
an infinite $p$-class field tower whenever the $p$-rank of $\Cl(K)$ is
at least 3.  Further, it is easy to show that if the $p$-rank of
$\Cl(K)$ is less than or equal to one, then $K$ has a finite class
field tower.  The only remaining case, where $d=r=2$, will be
discussed in Section \ref{quadim}, after we prove a stronger form of
the Golod-Shafarevich result.

\section{A Golod-Shafarevich Equality}

Our contribution to the theory will be to introduce and analyze a
series of invariants (dubbed $e_n$ below) that one can attach to a
finitely-generated pro-$p$-group to find the source of the
``inequality'' in the Golod-Shafarevich inequality.  These invariants,
which admit an interpretation in terms of a non-commutative Jacobian
map on formal power series, can be shown to supply a non-trivial error
term, leading to a refinement of the inequality.  As the beginning of
the proof will closely follow that given by Koch in the appendix of
\cite{Ha78}, we will largely omit details until the two proofs differ.

Let $G$ be a $d$-generated pro-$p$-group, and consider a minimal
presentation
\begin{align*}
\xymatrix{ 1\ar[r]&R\ar[r]^\iota&F \ar[r]^\phi& G\ar[r]&1 }
\end{align*}
of $G$ as a pro-$p$-group.  We choose lifts $\{\sigma_i\}_{i=1}^d$ to
$F$ of a minimal generating set for $G$, and go through an inductive
procedure to choose a generating system of relations which is minimal
with respect to the Zassenhaus filtration.  Namely, define
$R_0=\emptyset$, and for $n\geq 1$, let
$R_n=R_{n-1}\cup\{\rho_{n,1},\ldots,\rho_{n,{r_n}}\}$ where the
relations $\rho_{n,1},\rho_{n,2}\ldots,\rho_{n,{r_n}}$ are chosen so
that they and $R_{n-1}$ constitute a minimal system of generators for
$RF_{n+1}/F_{n+1}$.  In the process, we have also defined invariants
$r_k$ representing the number of relations of level $k$ in a minimal
presentation for $G$.  Note that $\sum_{k=1}^\infty r_k=r$.  The
(completed) group ring $\F_p[F]$ is isomorphic to the ring
$\F_p(d):=\F_p\{\{x_1,\ldots,x_d\}\}$ of formal power series in $d$
non-commuting variables over $\F_p$, the isomorphism being the linear
extension of the map sending $\sigma_i$ to $1+x_i$.

The map $\phi:F\ra G$ above extends naturally to a surjection (which
we also call $\phi$)
\begin{align*}
\xymatrix{ \F_p[F]\ar[r]^{\phi}&\F_p[G], }
\end{align*}
and we label the generators of $\F_p[G]$ by
$\ol{x}_i=\phi(\sigma_i)-1$, for $1\leq i\leq d$.  Letting $f_i$ be
the image of $(\rho_i-1)$ under the identification
$\F_p\{\{x_1,\ldots,x_d\}\}\iso \F_p[F]$, we have
$\ker(\phi)=(f_1,\ldots,f_r)$, and so $\F_p[G]\cong
\F_p\{\{x_1,\ldots,x_d\}\}/(f_1,\ldots,f_r)$.  Let
$I=(\ox_1,\ldots,\ox_d)$ denote the augmentation ideal of $\F_p[G]$,
and define the \emph{level} of an element $f\in \F_p[G]$ to be the
maximal $n$ such that $f\in I^n$.  Let $r_i$ be the number of defining
relations of level $i$, so that in particular we have $\sum r_i=r$.

The proof of the Golod-Shafarevich theorem centers around the exact
sequence of $\F_p[G]$-modules
\begin{align*}
\xymatrix@1{\bigoplus\limits_{i=1}^r\F_p[G]\ar[r]^{J}&\bigoplus\limits_{i=1}^d \F_p[G]\ar[r]^(.55){\psi}&\F_p[G]\ar[r]^(.55)\vare&\F_p\ar[r]&0},
\end{align*}
where we define the three maps as follows:
\begin{itemize}
\item $\vare$ is the augmentation map, which translates under $\phi$
to the ``evaluation at $(x_1,\ldots,x_d)=(0,\ldots,0)$'' map on power
series.
\item $\psi$ is the linear map defined by
\begin{align*}
\psi(g_1,\ldots,g_d)=\sum_{i=1}^d g_i\ox_i,
\end{align*}\
\item To define $J$ we introduce the \emph{Fox partial derivative
operators} $\frac{\partial f}{\partial x_j}$ for $f\in I\subset
\F_p[[F]]$ by observing that if $f\in I$, then $f$ has no constant
term and hence, after collecting the monomials appearing in $f$
according to their last factor, can be written uniquely in the form
$f=\sum \frac{\partial f}{\partial x_j}x_j$.  Now define $J$ (a
``non-commutative Jacobian'') by
\begin{align*}
J(g_1,\ldots,g_r):=\left(\sum_{i=1}^r g_i\phi\left(\frac{\partial f_i}{\partial x_1}\right),\ldots,\sum_{i=1}^r g_i\phi\left(\frac{\partial f_i}{\partial x_d}\right)\right).
\end{align*}
\end{itemize}
\noindent 

The sequence remains exact after taking quotients by suitable powers
of the augmentation ideal, and we arrive at the exact sequences
\begin{align*}
\xymatrix@C=15pt{
0\ar[r]&\ker(J_n)\ar[r]&\bigoplus\limits_{i=1}^r \F_p[G]/I^{n-v(f_i)}\ar[r]^{J_n}&\bigoplus\limits_{i=1}^d \F_p[G]/I^{n-1}\ar[r]^(.55){\psi_n}& \F_p[G]/I^n\ar[r]^(.6){\ol{\vare}}&\F_p\ar[r]& 0.\\
}
\end{align*}
for each $n\geq 1$.  Define
\begin{align*}
c_n:=\dim_{\F_p}\F_p[G]/I^n,\quad\quad e_n:=\dim_{\F_p}\ker J_n
\end{align*}
and set $I^n=\F_p[G]$ for $n\leq 0$, so that $c_n=e_n=0$ for $n\leq
0$.  Finally, recall that $r_i$ was the number of relations of level
$i$ for $i\geq 1$, and we set $r_0=1$ by convention.  Taking the
alternating sum of dimensions of the exact sequence above, and noting
that
\begin{align*}
\dim_{\F_p} \left(\bigoplus_{i=1}^r
\F_p[G]/I_{n-v(f_i)}\right)&=\sum_{i=1}^r c_{n-v(f_i)}=\sum_{i=1}^n
r_ic_{n-i},
\end{align*}
gives the following key result:
\begin{Theo}[Golod-Shafarevich Recursion Relation]\label{recrel}
For a $d$-generated pro-$p$-group $G$, and with all other notation as in the above paragraph, we have
\begin{align*}
\sum_{i=0}^n r_ic_{n-i}-dc_{n-1}=1+e_n
\end{align*}
for all $n\geq 1$.
\end{Theo}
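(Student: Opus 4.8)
The plan is to extract the recursion purely from the exactness of the six-term sequence displayed just above the statement, together with the observation that every term in it is a finite-dimensional $\F_p$-vector space.

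First I would record the finiteness. Since $G$ is a $d$-generated pro-$p$-group, each graded piece $I^k/I^{k+1}$ of the augmentation ideal is a finite-dimensional $\F_p$-vector space, so $c_n=\dim_{\F_p}\F_p[G]/I^n<\infty$ for all $n$; hence $\bigoplus_{i=1}^r\F_p[G]/I^{n-v(f_i)}$ and $\bigoplus_{i=1}^d\F_p[G]/I^{n-1}$ are finite-dimensional, and therefore so is $\ker J_n$. This makes it legitimate to take the alternating sum of dimensions along the exact sequence.

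Second, I would compute that alternating sum. The displayed sequence has six terms, beginning and ending with $0$; exactness forces its Euler characteristic to vanish, which (counting the leading $0$ with a plus sign, so that $\ker J_n$ enters with a minus sign) reads
\begin{align*}
-\,e_n+\sum_{i=1}^r c_{n-v(f_i)}-d\,c_{n-1}+c_n-1=0.
\end{align*}
Third, I would reorganize the relation term by level. Grouping the $r$ defining relations $f_i$ according to their level $v(f_i)$, and recalling that there are exactly $r_k$ relations of level $k$, gives $\sum_{i=1}^r c_{n-v(f_i)}=\sum_{k\geq1}r_k\,c_{n-k}$; since $c_m=0$ for $m\leq0$ by the convention $I^m=\F_p[G]$, the terms with $k\geq n$ vanish, and this becomes $\sum_{k=1}^{n-1}r_k\,c_{n-k}$. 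Finally I would absorb the remaining isolated terms into one sum using the conventions $r_0=1$ (so that $c_n=r_0c_{n-0}$) and $c_0=0$ (so that $r_nc_0=0$):
\begin{align*}
\sum_{i=0}^{n}r_i\,c_{n-i}=c_n+\sum_{i=1}^{n-1}r_i\,c_{n-i}.
\end{align*}
Substituting this into the vanishing Euler characteristic yields precisely $\sum_{i=0}^{n}r_i\,c_{n-i}-d\,c_{n-1}=1+e_n$, as claimed.

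There is no genuine obstacle once the exact sequence is available; the only thing requiring care is the bookkeeping --- matching the valuations $v(f_i)$ in the direct sum against the level counts $r_k$, keeping the alternating signs straight for a six-term complex, and handling the boundary conventions ($c_m=0$ for $m\leq0$, $r_0=1$) so that the answer can be written uniformly from $i=0$ to $i=n$. The one input used without proof here is the asserted exactness of the truncated sequences --- that is, exactness of the Fox-calculus sequence $\bigoplus\F_p[G]\ra\bigoplus\F_p[G]\ra\F_p[G]\ra\F_p\ra0$ and its persistence after passing to $I$-adic quotients --- but this is stated in the paragraph preceding the theorem and is taken as given.
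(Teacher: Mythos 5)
Your proposal is correct and follows exactly the paper's own route: the paper likewise obtains the recursion by taking the alternating sum of dimensions along the truncated exact sequence $0\ra\ker(J_n)\ra\bigoplus_{i=1}^r\F_p[G]/I^{n-v(f_i)}\ra\bigoplus_{i=1}^d\F_p[G]/I^{n-1}\ra\F_p[G]/I^n\ra\F_p\ra 0$ and regrouping $\sum_{i=1}^r c_{n-v(f_i)}=\sum_{i=1}^n r_ic_{n-i}$ by level, with the same conventions $r_0=1$ and $c_m=0$ for $m\leq 0$. Your extra care about finite-dimensionality and the sign bookkeeping is sound and matches what the paper leaves implicit.
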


Before stating the Golod-Shafarevich equality, we recall the following
theorem of Jennings which relates the dimension factors
$a_n=\dim_{\F_p}G_n/G_{n+1}$ to the invariants
$c_n=\dim_{\F_p}\F_p[G]/I^n$ defined above.
\begin{Theo}[Jennings, \cite{Je41}]\label{Jenn}
Let $G$ be a finitely-generated pro-$p$-group, and define 
\begin{align*}
b_n:=c_{n+1}-c_n=\dim_{\F_p}I^n/I^{n+1}, \quad\quad
P_n(t):=\frac{1-t^{n}}{1-t^{np}}.
\end{align*}
Then
\begin{align*}
\prod_{n=1}^\infty P_n(t)^{-a_n}=\sum_{n=1}^\infty b_nt^n.
\end{align*}
\end{Theo}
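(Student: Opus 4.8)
The plan is to read the left-hand side as the Poincar\'e series of the associated graded ring $\gr_I\F_p[G]=\bigoplus_{n\ge 0}I^n/I^{n+1}$: the series $\sum_n b_nt^n$, with $b_n=\dim_{\F_p}I^n/I^{n+1}$, is exactly this Poincar\'e series (the $n=0$ term, equal to $b_0=\dim_{\F_p}\F_p[G]/I=1$, being implicit in the displayed formula). I would then identify $\gr_I\F_p[G]$ with the restricted universal enveloping algebra $u(\mf g)$ of the graded restricted Lie algebra $\mf g:=\bigoplus_{n\ge 1}G_n/G_{n+1}$ attached to the Zassenhaus filtration; here the containment $[G_m,G_n]\subseteq G_{m+n}$ supplies the Lie bracket and the $p$-th-power map $G_n\to G_{np}$ supplies the $p$-operation on $\mf g$, and we write $a_n=\dim_{\F_p}G_n/G_{n+1}$. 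Because for each $n$ the invariants $b_k$ and $a_k$ with $k\le n$ depend only on the finite quotient $G/G_{n+1}$ — this uses the defining property $G_{n+1}=\{g:g-1\in I^{n+1}\}$, so that the kernel of $\F_p[G]\to\F_p[G/G_{n+1}]$, being generated by elements $h-1$ with $h\in G_{n+1}$, lies in $I^{n+1}$ — one may assume $G$ finite throughout. The theorem then follows from the structural isomorphism $\gr_I\F_p[G]\cong u(\mf g)$ together with the restricted Poincar\'e--Birkhoff--Witt theorem.

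To obtain the structural isomorphism I would run Quillen's Hopf-algebraic argument. The group algebra $\F_p[G]$ is a cocommutative Hopf algebra with comultiplication determined by $g\mapsto g\otimes g$, and its augmentation ideal $I$ is a Hopf ideal; hence $A:=\gr_I\F_p[G]$ is a connected, graded, cocommutative Hopf algebra over $\F_p$. It is generated as an algebra by $A_1=I/I^2$, since $I^n/I^{n+1}$ is spanned by products of $n$ elements of $I/I^2$; and in any connected graded bialgebra every degree-one element is primitive, so $A$ is primitively generated. The Milnor--Moore theorem in characteristic $p$ then yields a canonical Hopf isomorphism $A\cong u(P)$, where $P=P(A)$ is the graded restricted Lie algebra of primitives and the isomorphism is the one induced by the inclusion $P\hookrightarrow A$.

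The crux is to identify $\mf g$ with $P$. Writing $g_i=\phi(\sigma_i)\in G$ for the chosen generators, one checks that sending $g\in G_n$ to the class of $g-1$ in $I^n/I^{n+1}$ is additive on $G_n/G_{n+1}$ — because $(gh-1)-(g-1)-(h-1)=(g-1)(h-1)\in I^{2n}$ — takes values in $P$ — because for $g\in G_n$ the correction term $(g-1)\otimes(g-1)$ in $\Delta(g-1)$ has filtration degree $2n$ and so dies in degree $n$ of $A\otimes A$ — and is compatible with brackets and $p$-operations, hence defines a homomorphism $\lambda\colon\mf g\to P$ of graded restricted Lie algebras. Injectivity of $\lambda$ is immediate from the definition of $G_n$: if the class of $g-1$ vanishes in $I^n/I^{n+1}$ then $g-1\in I^{n+1}$, i.e. $g\in G_{n+1}$. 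In degree one $\lambda$ is also surjective, since $A_1=I/I^2$ is spanned by the classes of the $g_i-1$, so that $\lambda(\mf g_1)=A_1=P_1$. Because $A$ is generated by $A_1$, the composite $u(\mf g)\to u(P)\xrightarrow{\ \sim\ }A$ is surjective; because a restricted Lie subalgebra embeds into the restricted enveloping algebra of the ambient one (restricted PBW), $u(\mf g)\to u(P)$ is injective. Hence $u(\mf g)\xrightarrow{\ \sim\ }A\cong u(P)$, so $u(\mf g)$ and $u(P)$ have the same Poincar\'e series; since that series determines the graded dimensions, $\dim_{\F_p}\mf g_n=\dim_{\F_p}P_n$ for all $n$, and together with the componentwise-injective $\lambda$ this forces $\mf g=P$. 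Checking that the Hopf structure descends correctly, that Milnor--Moore genuinely applies over $\F_p$, and that the restricted-Lie-algebra bookkeeping is airtight is the step I expect to demand the most care.

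It remains to read off the Poincar\'e series from restricted PBW applied to $u(\mf g)$. Fix a homogeneous $\F_p$-basis of $\mf g$ and a total order on it; the ordered monomials in which each basis vector occurs with exponent at most $p-1$ form an $\F_p$-basis of $u(\mf g)$. A basis vector of degree $n$ thus contributes the factor $1+t^n+t^{2n}+\cdots+t^{(p-1)n}=\dfrac{1-t^{pn}}{1-t^n}=P_n(t)^{-1}$, and there are $a_n$ such basis vectors in degree $n$, so
\begin{align*}
\sum_{n\ge 0}b_nt^n\;=\;\bigl(\text{Poincar\'e series of }u(\mf g)\bigr)\;=\;\prod_{n=1}^\infty\left(\frac{1-t^{pn}}{1-t^n}\right)^{a_n}\;=\;\prod_{n=1}^\infty P_n(t)^{-a_n},
\end{align*}
which is the asserted equality.
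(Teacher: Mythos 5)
The paper does not prove this statement: it is imported verbatim from Jennings \cite{Je41} and used as a black box in the proof of Theorem \ref{GSEQ}, so there is no internal proof to compare yours against. Your sketch is a correct outline of the standard structural proof: reduce to the finite quotients $G/G_{n+1}$, identify $\gr_I\F_p[G]$ with the restricted enveloping algebra of the graded restricted Lie algebra $\bigoplus_n G_n/G_{n+1}$, and read off the Poincar\'e series from the restricted Poincar\'e--Birkhoff--Witt theorem. This is really Quillen's proof of the Jennings--Quillen theorem (\emph{On the associated graded ring of a group ring}, J.~Algebra 10 (1968)), not Jennings' original one, which is combinatorial: he builds an explicit weighted basis of $\F_p[G]$ from a basis of $G$ adapted to the dimension subgroups and counts monomials directly. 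The structural route is cleaner and makes the reduction to pro-$p$ groups transparent; the combinatorial route avoids Hopf-algebra machinery. Two small cautions, neither a gap in substance. First, as you note, the displayed identity only balances if the sum is taken over $n\geq 0$ with $b_0=1$; this is in fact how the formula is used later in the paper, in the step $\frac{t}{1-t}\cdot\frac{1}{\sum_{n\geq 1}c_nt^n}=\frac{1}{\sum_{n\geq 0}b_nt^n}=\prod_n P_n(t)^{a_n}$. Second, your appeal to ``Milnor--Moore in characteristic $p$'' must be to the un-signed version: $\gr_I\F_p[G]$ is cocommutative on the nose and its odd-degree primitives are not treated as exterior generators, which is precisely the setting Quillen handles, so citing his isomorphism $\gr_I\F_p[G]\cong u(\mf{g})$ directly is safer than rederiving it; your subsequent detour through $\mf{g}=P(A)$ is also unnecessary, since the isomorphism $u(\mf{g})\cong\gr_I\F_p[G]$ together with $\dim_{\F_p}\mf{g}_n=a_n$ already yields the product formula.
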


\noindent Collecting all of the above provides us with our desired result.

\begin{Theo}[A Golod-Shafarevich Equality]\label{GSEQ}
Let $G$ be a $d$-generated analytic pro-$p$-group, and take all other
notation as above.  Then
\begin{align*}
\sum_{k=2}^\infty r_kt^k-dt+1&=\prod_{n=1}^\infty P_n(t)^{a_n}+\,\frac{\sum_{n=1}^\infty e_nt^n}{\sum_{n=1}^\infty c_nt^n}
\end{align*}
for all $0\leq t<1$.
\end{Theo}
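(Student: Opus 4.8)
The plan is to massage the Golod--Shafarevich recursion relation (Theorem~\ref{recrel}) into an identity of formal power series, and then substitute Jennings' product formula (Theorem~\ref{Jenn}) for the generating function of the $c_n$. First I would form the generating functions $C(t):=\sum_{n\geq 1}c_nt^n$, $E(t):=\sum_{n\geq 1}e_nt^n$, and $R(t):=\sum_{k\geq 0}r_kt^k=1+\sum_{k\geq 2}r_kt^k$ (recall $r_0=1$ by convention and $r_1=0$ for a minimal presentation). Multiplying the recursion $\sum_{i=0}^n r_ic_{n-i}-dc_{n-1}=1+e_n$ by $t^n$ and summing over $n\geq 1$, the left-hand side becomes a product of power series: $\sum_n\left(\sum_{i=0}^n r_ic_{n-i}\right)t^n$ is the Cauchy product $R(t)\,C(t)$ (using $c_0=0$, so the $i=n$ term vanishes and no constant-term issues arise), while $\sum_n dc_{n-1}t^n=dt\,C(t)$. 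The right-hand side sums to $\frac{t}{1-t}+E(t)$. Hence
\begin{align*}
\big(R(t)-dt\big)C(t)=\frac{t}{1-t}+E(t).
\end{align*}

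Next I would bring in Jennings. Theorem~\ref{Jenn} states $\prod_{n=1}^\infty P_n(t)^{-a_n}=\sum_{n=1}^\infty b_nt^n$ with $b_n=c_{n+1}-c_n$. The generating function $\sum_{n\geq 1}b_nt^n=\sum_{n\geq 1}(c_{n+1}-c_n)t^n$ telescopes against $C(t)$: writing it out, $\sum_{n\geq 1}c_{n+1}t^n-\sum_{n\geq 1}c_nt^n=\frac{1}{t}\big(C(t)-c_1t\big)-C(t)=\frac{1-t}{t}C(t)-c_1$, and since $c_1=\dim_{\F_p}\F_p[G]/I=1$ this is $\frac{1-t}{t}C(t)-1=\frac{(1-t)C(t)-t}{t}$. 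Therefore Jennings gives
\begin{align*}
\prod_{n=1}^\infty P_n(t)^{-a_n}=\frac{(1-t)C(t)-t}{t},
\end{align*}
which I would solve for $C(t)$: $C(t)=\dfrac{t}{1-t}\left(1+\prod_{n=1}^\infty P_n(t)^{-a_n}\right)$. At this point one must be a little careful that $C(t)$ converges for $0\le t<1$ — this is exactly where analyticity of $G$ enters, guaranteeing $a_n=0$ for large $n$ (equivalently the product is a polynomial-type expression) so that the manipulations are valid as honest functions and not merely formal series; I would cite this standard fact.

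Finally I would combine the two displays: divide $\big(R(t)-dt\big)C(t)=\frac{t}{1-t}+E(t)$ through by $C(t)$ (nonzero for $0\le t<1$, since $c_1=1>0$) to get $R(t)-dt=\frac{1}{C(t)}\left(\frac{t}{1-t}+E(t)\right)=\frac{t}{(1-t)C(t)}+\frac{E(t)}{C(t)}$. Now $\frac{t}{(1-t)C(t)}=\dfrac{t}{(1-t)\cdot\frac{t}{1-t}\left(1+\prod P_n^{-a_n}\right)}=\dfrac{1}{1+\prod_{n}P_n(t)^{-a_n}}$. The elementary identity $\frac{1}{1+u^{-1}}=\frac{u}{u+1}$ won't quite land it directly; instead I observe $\dfrac{1}{1+\prod P_n^{-a_n}}$ — hmm, I would instead keep $C(t)$ in the form $C(t)=\frac{t}{1-t}(1+Q(t))$ with $Q(t)=\prod_n P_n(t)^{-a_n}$, so that $\frac{t}{(1-t)C(t)}=\frac{1}{1+Q(t)}$, and this does not obviously equal $\prod_n P_n(t)^{a_n}$. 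Re-examining: the claimed answer has $\prod P_n(t)^{a_n}$, i.e. $Q(t)^{-1}$, with a \emph{plus} sign, not $\frac{1}{1+Q}$. So the correct route is to \emph{not} divide by $C(t)$ on both terms symmetrically — rather, recall $\frac{t}{1-t}=Q(t)^{-1}\cdot\big((1-t)C(t)-t\big)\cdot\frac{1}{(1-t)}$... the cleanest path is: from $\prod P_n^{-a_n}=\frac{(1-t)C(t)-t}{t}$ we get $t\,Q(t)^{-1}=(1-t)C(t)-t$, hence $(1-t)C(t)=t(1+Q(t)^{-1})$ is wrong sign-wise too — let me just state the mechanism and leave the bookkeeping: substitute $C(t)$ from Jennings into $R(t)-dt=\frac{1}{C(t)}\big(\frac{t}{1-t}+E(t)\big)$, split as $\frac{t}{(1-t)C(t)}+\frac{E(t)}{C(t)}$, and simplify the first summand using $(1-t)C(t)=t+tQ(t)^{-1}$, i.e. $\frac{t}{(1-t)C(t)}=\frac{t}{t+tQ(t)^{-1}}=\frac{1}{1+Q(t)^{-1}}=\frac{Q(t)}{Q(t)+1}$; reconciling this with the stated $\prod P_n^{a_n}=Q(t)^{-1}$ will require re-deriving the telescoping constant (the value of $c_1$, or whether the sum in Jennings starts at $n=0$ or $n=1$), which is the one genuinely delicate point. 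The main obstacle, then, is pinning down these boundary/indexing conventions (the roles of $c_0$, $r_0=1$, $c_1$, and the lower limit in Jennings' product) so that the telescoped constant comes out to make the first summand exactly $\prod_{n=1}^\infty P_n(t)^{a_n}$; once the constant is right, recognizing $\frac{E(t)}{C(t)}=\frac{\sum e_nt^n}{\sum c_nt^n}$ as the stated error term is immediate, and the restriction $0\le t<1$ is just the domain of convergence guaranteed by analyticity of $G$.
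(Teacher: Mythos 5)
Your overall route is the same as the paper's: turn the recursion of Theorem~\ref{recrel} into the power-series identity $\bigl(R(t)-dt\bigr)C(t)=\frac{t}{1-t}+E(t)$, divide by $C(t)$, and identify $\frac{t}{(1-t)C(t)}$ via Jennings. But the point at which you stop --- reconciling $\frac{t}{(1-t)C(t)}$ with $\prod_n P_n(t)^{a_n}$ --- is a genuine gap in your write-up: you end with the (incorrect) intermediate expression $\frac{Q(t)}{Q(t)+1}$ and explicitly leave the reconciliation open. You did, however, correctly diagnose where the problem lives. The resolution is that the Poincar\'e series in Jennings' identity must include the degree-zero term $b_0=\dim_{\F_p}I^0/I^1=c_1-c_0=1$; the correct form is $\prod_{n=1}^\infty P_n(t)^{-a_n}=\sum_{n=0}^\infty b_nt^n$ (both sides equal $1$ at $t=0$; the displayed statement of Theorem~\ref{Jenn} starting the sum at $n=1$ is an indexing slip, and the proof of Theorem~\ref{GSEQ} in the paper indeed writes $\sum_{n=0}^\infty b_nt^n$). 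With $b_0$ included, the telescoping gives $(1-t)C(t)=t\sum_{n=0}^\infty b_nt^n$, hence
\begin{align*}
\frac{t}{(1-t)C(t)}=\frac{1}{\sum_{n=0}^\infty b_nt^n}=\prod_{n=1}^\infty P_n(t)^{a_n},
\end{align*}
which is exactly the first summand of the claimed right-hand side; there is no $\frac{1}{1+Q}$ versus $\frac{1}{Q}$ discrepancy once the constant term is accounted for. You should carry this computation out rather than leave the bookkeeping unresolved, since as written your chain of displayed identities would actually contradict the theorem.

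Two smaller points. Your parenthetical that analyticity guarantees $a_n=0$ for large $n$ is wrong: analytic pro-$p$-groups need not be finite (e.g.\ $\Z_p$ has $a_{p^c}=1$ for every $c\geq 0$). What analyticity buys, and what the paper cites Lazard for, is that $\sum c_nt^n$ converges absolutely on $[0,1)$; you then also need convergence of $\sum e_nt^n$, which you do not address --- it follows from the bound $e_n\leq rc_n$, since $\ker J_n$ sits inside $\bigoplus_{i=1}^r\F_p[G]/I^{n-v(f_i)}$. The division by $C(t)$ is legitimate since $c_1=1$ forces $C(t)>0$ on $(0,1)$, as you note.
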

\begin{proof}
Since $G$ is analytic, the power series $\sum c_nt^n$ converges
absolutely on the unit interval (\cite{La64}), and since $e_n\leq rc_n$ by
definition of the vector space whose dimension it measures, so does
$\sum e_nt^n$.  Absolute convergence now allows us to re-write
\begin{align*}
\left(\sum_{k=0}^\infty r_kt^k-dt\right)\left(\sum_{n=1}^\infty c_nt^n\right)=\sum_{n=1}^\infty\sum_{i=0}^n (r_ic_{n-i}-dc_{n-1})t^n=\sum_{n=1}^\infty (1+e_n)t^n,
\end{align*}
the second equality following from Theorem \ref{recrel}.  Further, we
have $r_0=1$ and $r_1=0$, and so we can re-write $\sum_{k=0}^\infty
r_kt^k-dt=\sum_{k=2}^\infty r_kt^k-dt+1$.  Solving the previous equation for this quantity gives
\begin{align*}
\sum_{k=2}^\infty r_kt^k-dt+1=\frac{\sum_{n=1}^\infty (1+e_n)t^n}{\sum_{n=1}^\infty c_nt^n}=\frac{t}{(1-t)}\cdot\frac{1}{\sum_{n=1}^\infty c_nt^n}+\,\frac{\sum_{n=1}^\infty e_nt^n}{\sum_{n=1}^\infty c_nt^n},
\end{align*}
and the result now follows from
\begin{align*}
\frac{t}{1-t}\cdot \frac{1}{\sum_{n=1}^\infty c_nt^n}=\frac{1}{\sum_{n=0}^\infty b_nt^n}=\prod_{n=1}^\infty P_n(t)^{a_n},
\end{align*}
the last step being Theorem \ref{Jenn}.
\end{proof}
\begin{Remark}\label{Kochform}
Koch's proof in \cite{Ha78} gives $\sum r_kt^k-dt+1>0$, which is
obtained from the current theorem by noting that $e_n\geq 0$ for all
$n$, and that $P_n(t)>0$ for all $t\in(0,1)$.  Either of these
versions allows you to conclude Theorem \ref{medgs}.  One simply
observes that the assumption that $R\subset F_m$ implies $r_k=0$ for
all $k<m$, and that the right-hand side of the equation in Theorem
\ref{GSEQ} is strictly positive, giving
\begin{align*}
\sum_{k=2}^\infty r_kt^k-dt+1=\sum_{k=m}^\infty r_kt^k-dt+1>r_mt^m-dt+1>0.
\end{align*}
This last inequality is violated at
$t=\left(\frac{d}{mr}\right)^{1/(m-1)}$ unless $r\leq
d^m\frac{(m-1)^{m-1}}{m^m}$.
\end{Remark}

For a finite $p$-group $G$, we have $I^n=0$ for sufficiently large $n$
(\cite{Ko02}, Lemma 7.9), and so
\begin{align*}
c_n=\dim_{\F_p}\F_p[G]/I^n=\dim_{\F_p}\F_p[G]=|G|
\end{align*}
for all sufficiently large $n$.  More concretely, Jennings' theorem
implies that $b_n$ ($=c_{n+1}-c_n$) is zero for all $n>N:=(p-1)\sum
na_n$ (the degree of the polynomial $\prod P_n(t)^{-a_n}$), implying
that the sequence $\{c_n\}$ stabilizes after this term.  The
Golod-Shafarevich recursion relation
\begin{align*}
\sum_{i=0}^n r_ic_{n-i}-dc_{n-1}=1+e_n
\end{align*}
in turn implies that $1+e_n=(r+1-d)|G|$ for all sufficiently large
$n$, and hence that the term $\frac{\sum e_nt^n}{\sum c_nt^n}$
appearing on the right-hand side of the Golod-Shafarevich equality is
non-zero for any group with $r\geq d$ (e.g., finite groups).  In the
author's Ph.D. thesis (\cite{Mc08}), this observation is used to give
an improvement on the version of the Golod-Shafarevich inequality
described in Remark \ref{Kochform}.

\begin{Cor}\label{strict}
Let $G$ be a finite $p$-group, let $N=(p-1)\sum_{n=1}^\infty na_n(G)$,
let $m$ be the level of the deepest relation defining $G$, and take all other notation as in Theorem \ref{GSEQ}.  Then
\begin{align*}
\sum_{k=2}^\infty r_kt^k-dt+1>\prod_{n=1}^\infty P_n(t)^{a_n}+(1-d+r)\left(1-\frac{1}{|G|}\right)t^{N+m}>0,
\end{align*}
for all $0\leq t<1$.
\end{Cor}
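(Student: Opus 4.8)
The plan is to start from the exact Golod-Shafarevich equality of Theorem \ref{GSEQ} and show that the error term $\frac{\sum e_n t^n}{\sum c_n t^n}$ is bounded below by $(1-d+r)\left(1-\frac{1}{|G|}\right)t^{N+m}$ on $[0,1)$. The two displayed inequalities in the corollary then follow: the left one is the equality with the error term replaced by this lower bound, and the right one follows because the subtracted quantity $\prod P_n(t)^{a_n}+(1-d+r)(1-\tfrac1{|G|})t^{N+m}$ is still manifestly positive (each $P_n(t)>0$ for $t\in(0,1)$, the product is positive, and $1-d+r\ge 1$ since $r\ge d$ for a finite $p$-group, as noted in the paragraph preceding the corollary).

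First I would pin down the behavior of the numerator $\sum e_n t^n$. As observed in the excerpt, for a finite $p$-group the sequence $c_n$ stabilizes at $|G|$ for $n>N$, and the recursion relation $\sum_{i=0}^n r_i c_{n-i}-dc_{n-1}=1+e_n$ forces $1+e_n=(r+1-d)|G|$, i.e. $e_n=(r+1-d)|G|-1$, for all $n$ large enough — specifically once $n$ exceeds $N+m$ (so that every shifted index $n-i$ with $r_i\ne 0$ already lies beyond $N$). Thus $e_n\ge (r+1-d)|G|-1=(r+1-d)\left(|G|-\tfrac{1}{|G|}\cdot|G|\right)$... more cleanly, I would write $e_n\ge (r+1-d)(|G|-1)$ for $n> N+m$ and $e_n\ge 0$ for all $n$. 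Pairing this with the denominator, which satisfies $c_n\le |G|$ for every $n$, I get the term-by-term estimate: for $t\in[0,1)$,
\begin{align*}
\frac{\sum_n e_n t^n}{\sum_n c_n t^n}\ \ge\ \frac{(r+1-d)(|G|-1)\sum_{n>N+m}t^n}{|G|\sum_n t^n/(1-t)\cdots}
\end{align*}
— here I must be a little careful about how to bound the ratio cleanly; the cleanest route is to use that $\sum_n c_n t^n\le |G|\cdot\frac{t}{1-t}$ only when this is convenient, but in fact one wants a lower bound on the ratio, so I would instead argue as follows.

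The honest way to extract the lower bound on the ratio is to multiply through: the equality of Theorem \ref{GSEQ} rearranges to
\begin{align*}
\left(\sum_{k=2}^\infty r_k t^k-dt+1-\prod_{n=1}^\infty P_n(t)^{a_n}\right)\sum_{n=1}^\infty c_n t^n=\sum_{n=1}^\infty e_n t^n,
\end{align*}
so it suffices to bound $\sum e_n t^n$ below by $(1-d+r)(1-\tfrac1{|G|})t^{N+m}\sum c_n t^n$ termwise in the coefficient sense. Since $\sum c_n t^n=\sum_{j\ge1}c_j t^j$ and the coefficient of $t^n$ in $t^{N+m}\sum c_j t^j$ is $c_{n-N-m}\le |G|$, while the coefficient of $t^n$ in $\sum e_n t^n$ is $e_n$, it is enough to check $e_n\ge (1-d+r)(1-\tfrac1{|G|})|G|=(1-d+r)(|G|-1)$ whenever $n-N-m\ge 1$, which is exactly the stabilization computation above; for the remaining small $n$ the right side contributes nothing and $e_n\ge0$. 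Dividing back by $\sum c_n t^n>0$ on $(0,1)$ (and noting both sides vanish at $t=0$) gives the first inequality, and positivity of the resulting right-hand side gives the second.

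The main obstacle I anticipate is the bookkeeping at the threshold: verifying precisely that $e_n=(r+1-d)|G|-1$ holds for all $n>N+m$ — one has to be sure $N$ really is the last index with $c_{N+1}\ne c_N$ (Jennings gives $b_n=0$ for $n>N$, so $c_n=c_{N+1}=|G|$ for $n\ge N+1$, and one should double-check the off-by-one between $b_n$ and $c_n$), and that shifting by the deepest relation level $m$ in $\sum_{i=0}^n r_i c_{n-i}$ is exactly what is needed so that every term $c_{n-i}$ with $r_i\ne 0$ (i.e. $i\le m$) has already reached $|G|$. Everything else — the positivity of $P_n(t)$, the sign of $1-d+r$, absolute convergence justifying the rearrangement — is already supplied by Theorem \ref{GSEQ} and the discussion preceding the corollary.
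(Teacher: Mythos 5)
Your argument is correct and matches the route the paper intends: the corollary is stated here without a written proof (the details are deferred to the author's thesis), but the paragraph preceding it --- stabilization of $c_n$ at $|G|$ for $n>N$ via Jennings, hence $1+e_n=(r+1-d)|G|$ once $n>N+m$ via the recursion relation --- is exactly the computation you carry out, and your coefficient-wise comparison of $\sum e_nt^n$ with $(1-d+r)\bigl(1-\tfrac{1}{|G|}\bigr)t^{N+m}\sum c_nt^n$ is a clean and valid way to convert it into the stated bound on the error term. The only quibble is at the endpoint $t=0$, where the claimed strict inequality degenerates to an equality ($1>1$); this is an artifact of the paper's statement rather than a defect of your proof, which handles all $t\in(0,1)$ correctly.
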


\section{Quadratic Imaginary Number Fields}\label{quadim}

As discussed in the introduction, the problem of determining the
finiteness of the $p$-tower group $G_K^\infty$ is largely solved in
the case that $K$ is a quadratic imaginary number field and $p$ is an
odd prime.  Namely, the problem is almost completely decided by
$d(G)$, which is computable as the $p$-rank of the class group of $K$:
If $d\leq 1$, $G_K^\infty$ is finite, and if $d\geq 3$, $G_K^\infty$
is infinite.  We are thus left with the case of $d=r=2$, and the
Golod-Shafarevich equality (or Koch's form of the inequality given in
Remark \ref{Kochform}) yields further information in this case.
Namely, since $e_n\geq 0$ and $P_n(t)\geq 0$ for all $n$ and
$t\in(0,1)$, Theorem \ref{GSEQ} gives for all $t\in(0,1)$ the
inequality
\begin{align*}
t^{m_1}+t^{m_2}-2t+1>0,
\end{align*}
where $m_1\leq m_2$ are the levels of the two relations in a minimal
presentation of $G$.  Further, we have that $m_1$ and $m_2$ are both
odd (again by \cite{KV74}) and greater than one (since $r_1=0$).  It
is now easily checked that there are only three choices for the pair
$(m_1,m_2)$ for which the inequality is not violated, i.e., three
possible relation structures for $G_K^\infty$ under the assumption
that the group is finite.  For ease of reference, we will call a
pro-$p$-group \emph{interesting} if it is finite, 2-generated and
2-related, and has relations only in odd levels.
\begin{Theo}[Koch-Venkov]\label{KV}
If $G$ is an interesting pro-$p$-group with relations in levels $m_1$
and $m_2$ with $m_1\leq m_2$, then we have
\begin{align*}
(m_1,m_2)\in\{(3,3),(3,5),(3,7)\}.
\end{align*}
We call this pair the \emph{Zassenhaus type} (or just \emph{Z-type}) of the group.
\end{Theo}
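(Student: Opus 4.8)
The plan is to reduce the whole statement to the elementary polynomial inequality already extracted from Theorem~\ref{GSEQ} in the paragraph preceding the statement. If $G$ is interesting, then it is finite, hence an analytic pro-$p$-group, so Theorem~\ref{GSEQ} applies with $d=2$, $r=2$, and with $r_k=\#\{\,i : m_i=k\,\}$ for $k\ge 2$ (so $r_{m_1}=r_{m_2}=1$ if $m_1<m_2$, and $r_{m_1}=2$ if $m_1=m_2$); in either case the left-hand side of the identity in Theorem~\ref{GSEQ} is exactly $t^{m_1}+t^{m_2}-2t+1$. On the right-hand side, $P_n(t)>0$ and $a_n\ge 0$ for $t\in(0,1)$, while $e_n\ge 0$ and the denominator $\sum_{n\ge 1}c_nt^n$ is strictly positive on $(0,1)$ (already $c_1=1$), so the right-hand side is strictly positive there. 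Thus
\[
  t^{m_1}+t^{m_2}-2t+1>0\qquad(0<t<1).
\]
Since $m_1,m_2$ are odd (by \cite{KV74}) and exceed $1$ (as $r_1=0$ in a minimal presentation), we have $m_1,m_2\ge 3$, and it remains only to show that this positivity forces $(m_1,m_2)\in\{(3,3),(3,5),(3,7)\}$.

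First I would eliminate the case $m_1\ge 5$ by evaluating at $t_0=5^{-1/4}\in(0,1)$. For $0<t_0<1$ and $m_1,m_2\ge 5$ one has $t_0^{m_1}\le t_0^{5}$ and $t_0^{m_2}\le t_0^{5}$, so the left-hand side at $t_0$ is at most $2t_0^{5}-2t_0+1=2\cdot 5^{-5/4}-2\cdot 5^{-1/4}+1$, whose value is $\approx -0.07<0$, contradicting the displayed inequality; hence $m_1=3$. Next I would eliminate $m_2\ge 9$ by evaluating at $t_1=\tfrac{7}{10}$: for $0<t_1<1$ and $m_2\ge 9$ one has $t_1^{m_2}\le t_1^{9}$, so the left-hand side is at most $t_1^{3}+t_1^{9}-2t_1+1=(0.7)^3+(0.7)^9-1.4+1$, a finite rational computation giving $\approx -0.0166<0$. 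This again contradicts the displayed inequality, so $m_2\le 7$; combined with $m_1=3$ and the oddness of $m_2$, this yields $(m_1,m_2)\in\{(3,3),(3,5),(3,7)\}$, which is the assertion.

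Finally, for the (unstated but implicit) claim that these really are \emph{three possible} relation structures, I would check that each surviving pair actually satisfies the inequality on all of $(0,1)$: the polynomial $t^3+t^{m_2}-2t+1$ has a unique interior critical point --- explicitly at $t=3^{-1/2}$ for $m_2=3$ and at $t=(2/5)^{1/2}$ for $m_2=5$ (after the substitution $u=t^2$), and located numerically near $t\approx 0.67$ for $m_2=7$ --- at which its value is $\approx 0.23$, $0.089$, $0.021$ respectively, all positive. There is no genuinely hard step here: the three points to be careful about are that a finite $p$-group is an analytic pro-$p$-group so Theorem~\ref{GSEQ} applies, that the denominator $\sum c_nt^n$ does not vanish on $(0,1)$, and --- crucially --- that for $0<t<1$ deeper relations only \emph{decrease} the left-hand side ($t^m\le t^{m'}$ when $m\ge m'$), which is exactly what lets the two numerical test values dispatch all of the infinitely many excluded pairs at once.
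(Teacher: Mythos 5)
Your proposal is correct and follows essentially the same route as the paper: the paper's argument is exactly the derivation of $t^{m_1}+t^{m_2}-2t+1>0$ on $(0,1)$ from Theorem \ref{GSEQ} (using $e_n\geq 0$, $P_n(t)\geq 0$, and the oddness and $>1$ constraints from \cite{KV74}), after which it merely asserts that the remaining case analysis ``is now easily checked.'' You have supplied that elementary check explicitly (the test points $t_0=5^{-1/4}$ and $t_1=7/10$ both work, and the monotonicity remark $t^m\leq t^{m'}$ for $m\geq m'$ correctly dispatches all deeper pairs at once), so your write-up is a faithful, slightly more detailed version of the paper's own proof.
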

This classification of Z-types for interesting $p$-tower groups
provides hope for computationally showing a given $p$-tower group to
be infinite by showing that any relations defining it lie deeper than
the third level.  For example, the author (\cite{Mc09}) used work of
Vogel (\cite{Vo04}) to show that the vanishing of certain traces of
Massey products on the $\F_p$-cohomology of $G_K^\infty$ implies the
infinitude of the group.

Before moving on, we pause to remark on the current state of knowledge
on abstract pro-$p$-groups of these three Z-types (as always, with $p$
odd):
\begin{itemize}
\item \textbf{Z-Type (3,3):} All known finite non-cyclic $p$-tower
groups, dating back to the earliest examples from Scholz and Taussky
(\cite{ST34}), are of this Z-type.  Recently, Bartholdi and Bush
\cite{BB06} constructed and analyzed an infinite series of 3-groups of
$Z$-type $(3,3)$ whose derived lengths tend to infinity, providing the
first explicit candidates for interesting $p$-tower groups of length greater than two.
\item \textbf{Z-Type (3,5):} Skopin (\cite{Sk73}) has found a family
of finite examples of Z-type $(3,5)$, and Koch and Venkov
(\cite{KV74}) were able to find some infinite examples (using a
variant of the Golod-Shafarevich inequality).  No groups in either of
these families have been shown to occur as $p$-tower groups.
\item \textbf{Z-Type (3,7):} No finite $p$-groups of this Z-type are
known.  Skopin (\cite{Sk73}) has placed a lower bound on the size of a
small family of such groups.  We will expand the scope of this result
in Theorem \ref{cardbound} by showing that \emph{any} pro-$p$-group of
this type must be particularly large.
\end{itemize}

Returning to Corollary \ref{strict} (and recalling the notation
therein), we remark that since interesting $p$-tower groups are of one
of only those three possible Z-types, we can take $m=7$ for any
interesting $p$-tower group.  Further, since $N$ depends only on $p$
and the series $\{a_n(G)\}$, Corollary \ref{strict} gives a strict
strengthening of the Golod-Shafarevich inequality without referring to
any new invariants of the group beyond the dimension factors (one can
replace the constant in front of $t^{N+m}$ with $\frac{p-1}{p}$ so
that no knowledge of the order of the group is required).  Motivated
by this observation, we will return to the implications of the
Golod-Shafarevich equality on interesting groups of Z-type $(3,7)$
after extracting more detailed information on the values of the
dimension factors of 2-generated 2-related pro-$p$-groups.

\subsection{Bounds on Dimension Factors}

Of principal importance in determining dimension factors is the
following theorem of Lazard giving an explicit description of the
modular dimension subgroups $G_n$ in terms of the lower central series
(defined recursively by $\gamma_1(G)=G$,
$\gamma_n(G)=[G,\gamma_{n-1}(G)]$).

\begin{Theo}[Lazard, \cite{La54}]\label{lazard}
For any group $G$ and any prime $p$, we have that the $n$-th dimension
subgroup $G_n$ of $G$ is given by
\begin{align*}
G_n=\prod_{ip^j\geq n}\gamma_i(G)^{p^j}.
\end{align*}
\end{Theo}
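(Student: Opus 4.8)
The plan is to prove the two inclusions of the asserted equality separately. Write $D_n:=\prod_{ip^j\ge n}\gamma_i(G)^{p^j}$ for the subgroup on the right, and recall $G_n=\{g\in G: g-1\in I^n\}$, where $I$ is the augmentation ideal of $\F_p[G]$. The inclusion $D_n\subseteq G_n$ is the elementary half and is essentially already in the paper: working in $\F_p[G]$, the identity $[a,b]-1=a^{-1}b^{-1}\bigl((a-1)(b-1)-(b-1)(a-1)\bigr)$ gives $[G_s,G_t]\subseteq G_{s+t}$, and the characteristic-$p$ identity $a^p-1=\bigl(1+(a-1)\bigr)^p-1=(a-1)^p$ (the intermediate binomial coefficients vanish mod $p$) gives $G_s^p\subseteq G_{sp}$. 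An easy induction yields $\gamma_i(G)\subseteq G_i$, hence $\gamma_i(G)^{p^j}\subseteq G_i^{p^j}\subseteq G_{ip^j}\subseteq G_n$ whenever $ip^j\ge n$; since $G_n$ is a subgroup, $D_n\subseteq G_n$.

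For the reverse inclusion $G_n\subseteq D_n$ — the substance of the theorem — I would pass to associated graded objects. Form the graded $\F_p$-algebra $\gr\F_p[G]=\bigoplus_{m\ge 0}I^m/I^{m+1}$ and the graded group-side object $\mathcal L:=\bigoplus_{m\ge 1}D_m/D_{m+1}$. Standard commutator calculus (the Hall--Petrescu formula) gives $[D_s,D_t]\subseteq D_{s+t}$ and $D_s^p\subseteq D_{sp}$, and that $g\mapsto g^p$ descends to an additive map $D_s/D_{s+1}\to D_{sp}/D_{sp+1}$; thus $\mathcal L$ is a \emph{restricted Lie algebra} over $\F_p$ under the commutator bracket and this $p$-operation. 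Because $D_m\subseteq G_m$ (just proved), the rule $gD_{m+1}\mapsto (g-1)+I^{m+1}$ is a well-defined map $\mathcal L_m\to(\gr\F_p[G])_m$ compatible with bracket and $p$-operation, so by the universal property of the restricted enveloping algebra it extends to a homomorphism of graded $\F_p$-algebras $\Theta\colon\mathfrak u(\mathcal L)\to\gr\F_p[G]$.

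The crux is that $\Theta$ is an isomorphism. Surjectivity is immediate: $(\gr\F_p[G])_1$ is spanned by the classes $(g-1)+I^2$, which lie in the image and generate the target as an algebra. For injectivity I would fix an ordered homogeneous $\F_p$-basis of $\mathcal L$; the restricted Poincar\'e--Birkhoff--Witt theorem then furnishes an explicit $\F_p$-basis of $\mathfrak u(\mathcal L)$ consisting of increasing monomials in these generators with every generator appearing to a power less than $p$, and it suffices to prove the $\Theta$-images of these monomials are $\F_p$-linearly independent in $\gr\F_p[G]$. Since $\Theta$ is already known to be onto degree by degree, this amounts to showing that $\gr\F_p[G]$ is not strictly larger than $\mathfrak u(\mathcal L)$ in any degree — i.e.\ that the only relations among the $(g-1)$'s in $\gr\F_p[G]$ are those forced by the defining relations of $G$ together with the modular identity $(g-1)^p\equiv g^p-1$. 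I expect establishing exactly this non-collapse (the content of Jennings' structure theorem for the associated graded ring) to be the main obstacle: it is precisely where characteristic $p$ is indispensable — the corresponding statement over $\Z$ is false — and it requires carefully matching the $p$-th-power relations on the two sides degree by degree.

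Granting that $\Theta$ is an isomorphism, in particular each component $\mathcal L_m\hookrightarrow(\gr\F_p[G])_m$, $gD_{m+1}\mapsto(g-1)+I^{m+1}$, is injective. To finish, take $g\in G_n$. If $g\in D_m$ for every $m$, then already $g\in D_n$; otherwise let $m$ be the largest index with $g\in D_m$ (it exists, since $g\in D_1=G$ and the set of such indices is bounded above, the $D_k$ being descending). Then $gD_{m+1}$ is a nonzero element of $\mathcal L_m$, so its image $(g-1)+I^{m+1}$ in $(\gr\F_p[G])_m$ is nonzero; hence $g-1\notin I^{m+1}$, i.e.\ $g\notin G_{m+1}$. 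Since $g\in G_n$ and the $G_k$ are descending, this forces $n\le m$, whence $g\in D_m\subseteq D_n$. This proves $G_n\subseteq D_n$ and completes the argument. (In the pro-$p$ setting one runs the same proof in the completed group algebra $\F_p[[G]]$; all the constructions are compatible with the inverse system of finite quotients, so everything carries over.)
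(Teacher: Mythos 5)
The paper offers no proof of this statement---it is quoted from Lazard---so your attempt must be judged on its own terms, and on those terms it has a genuine gap exactly where you say the ``main obstacle'' lies. The easy inclusion $\prod_{ip^j\geq n}\gamma_i(G)^{p^j}\subseteq G_n$ is complete and correct. But the reverse inclusion is made to rest entirely on the injectivity of $\Theta\colon\mf{u}(\mc{L})\to\gr\F_p[G]$, which you defer rather than prove; and that deferred claim is not a side lemma one can cite into place---it is essentially equivalent to the theorem itself. In the standard treatments (Jennings, Lazard, Quillen, or \cite{DDS99} Ch.~11--12) the injectivity of $\Theta$ and the identity $G_n=D_n$ are established \emph{simultaneously} by a single counting argument, so neither is available as an input to the other. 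Concretely: choose $u_{n,i}\in D_n$ projecting to a basis of $D_n/D_{n+1}$; prove (for $G$ a finite $p$-group, to which the general case is reduced via the quotients $G/D_m$) that every $g\in G$ has a unique normal form $\prod u_{n,i}^{e_{n,i}}$ with $0\leq e_{n,i}<p$; deduce that the ordered monomials $\prod(u_{n,i}-1)^{e_{n,i}}$ span $\F_p[G]$ and that those of weight at least $m$ (the weight of $u_{n,i}$ being $n$) span $I^m$; and finally compare $|G|=\prod_n p^{\dim D_n/D_{n+1}}$ with the number of such monomials to conclude they are a basis and that the weight filtration coincides with the $I$-adic one. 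That dimension count is the missing step; without it your final paragraph, which correctly derives $G_n\subseteq D_n$ from the injectivity of $\mc{L}_m\to(\gr\F_p[G])_m$, has nothing to stand on.

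Two smaller points. The $p$-power map $D_s/D_{s+1}\to D_{sp}/D_{sp+1}$ is not additive but only a restricted $p$-operation (additivity fails by the correction terms $s_i(x,y)$ in the restricted Lie axioms, coming from Hall--Petrescu); this does not change the architecture, but it must be stated correctly for the universal property of $\mf{u}(\mc{L})$ to apply. And since the theorem is asserted for an arbitrary group $G$, one needs the reduction to the finite (or at least nilpotent) quotients $G/D_m$ before any counting is possible; your closing parenthetical addresses pro-$p$ completions but not this reduction.
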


As a simple consequence, since a surjection of groups $H\ra K$ induces
a surjection $\gamma_n(H)\ra \gamma_n(K)$ for all $n$, we obtain the
following as an immediate corollary of Lazard's theorem.

\begin{Cor}\label{GmodH}
A surjection of groups $H\ra K$ induces surjections
\begin{align*}
\xymatrix{H_n/H_{n+1}\ar@{->>}[r]& K_n/K_{n+1}}
\end{align*}
for all $n\geq 1$.  In particular, surjections $H\ra G$ and $G\ra K$
give the inequalities
\begin{align*}
a_n(H)\geq a_n(G)\geq a_n(K).
\end{align*}
\end{Cor}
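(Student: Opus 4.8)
The plan is to deduce everything from Lazard's theorem (Theorem \ref{lazard}) together with the elementary fact that a surjection $\pi\colon H\twoheadrightarrow K$ of groups carries $\gamma_i(H)$ onto $\gamma_i(K)$ for every $i$. First I would recall why this latter fact holds: $\gamma_1(H)=H$ maps onto $\gamma_1(K)=K$ by hypothesis, and inductively, if $\pi(\gamma_{i-1}(H))=\gamma_{i-1}(K)$, then since commutators are sent to commutators, $\pi$ sends the generating commutators $[h,g]$ (with $h\in H$, $g\in\gamma_{i-1}(H)$) of $\gamma_i(H)$ onto the generating commutators $[\pi(h),\pi(g)]$ of $\gamma_i(K)$, whence $\pi(\gamma_i(H))=\gamma_i(K)$. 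Raising to the $p^j$-th power commutes with $\pi$, so $\pi(\gamma_i(H)^{p^j})=\gamma_i(K)^{p^j}$ as well, and therefore $\pi$ maps the product $\prod_{ip^j\ge n}\gamma_i(H)^{p^j}$ onto $\prod_{ip^j\ge n}\gamma_i(K)^{p^j}$. By Lazard's theorem the left-hand product is $H_n$ and the right-hand product is $K_n$, so $\pi(H_n)=K_n$ for all $n$; in particular $\pi(H_{n+1})=K_{n+1}$.

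Next I would pass to the graded pieces. Since $\pi(H_n)=K_n$ and $\pi(H_{n+1})=K_{n+1}$, the restriction $\pi|_{H_n}\colon H_n\to K_n$ descends to a well-defined homomorphism $\bar\pi_n\colon H_n/H_{n+1}\to K_n/K_{n+1}$, and it is surjective because $\pi|_{H_n}$ already is. (One should note here that these quotients are abelian — indeed $\F_p$-vector spaces, as observed in the definition of the dimension factors — so "surjection of groups" and "surjection of $\F_p$-vector spaces" coincide, and $\bar\pi_n$ is $\F_p$-linear.) This gives the displayed surjection $H_n/H_{n+1}\twoheadrightarrow K_n/K_{n+1}$ for all $n\ge 1$.

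Finally, for the inequality on dimension factors: a surjection of finite-dimensional $\F_p$-vector spaces forces the dimension of the source to be at least that of the target, so $a_n(H)=\dim_{\F_p}H_n/H_{n+1}\ge \dim_{\F_p}K_n/K_{n+1}=a_n(K)$ whenever $H\twoheadrightarrow K$. Applying this to $H\twoheadrightarrow G$ gives $a_n(H)\ge a_n(G)$, and applying it to $G\twoheadrightarrow K$ gives $a_n(G)\ge a_n(K)$, which is the chain of inequalities claimed. There is no real obstacle here — the only point requiring a moment's care is the inductive verification that $\pi$ is surjective on lower central series terms (and the compatibility with $p$-th powers), since everything downstream is a formal consequence of Lazard's description of $G_n$; I would present that induction explicitly and treat the rest as immediate.
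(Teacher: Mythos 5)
Your proposal is correct and follows exactly the paper's intended argument: the paper derives the corollary as an immediate consequence of Lazard's theorem together with the observation that a surjection induces surjections on lower central series terms, which is precisely what you spell out. The additional details you supply (the induction on $\gamma_i$, compatibility with $p$-th powers and products, and passing to graded quotients) are the routine verifications the paper leaves implicit.
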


We will apply the corollary to bound various dimension factors of an
interesting $p$-tower group $G$ from above and below.  The lower bound
is easiest:

\begin{Prop}\label{lowbound}
Let $G$ be an interesting $p$-tower group with abelianization of type
$(p^a,p^b)$ with $1\leq a\leq b$.  Then
\begin{align*}
a_n(G)\geq \begin{cases}
2&n=p^c,\quad0\leq c\leq a-1\\
1&n=p^c,\quad a\leq c\leq b-1\\
	   \end{cases}
\end{align*}
\end{Prop}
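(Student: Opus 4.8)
The plan is to push the abelianized group through Corollary~\ref{GmodH}. Since $G$ is $2$-generated, there is a surjection $G \twoheadrightarrow G^{ab}$, and $G^{ab} \cong \Z/p^a \times \Z/p^b$ by hypothesis. By Corollary~\ref{GmodH}, $a_n(G) \geq a_n(G^{ab})$ for every $n$, so it suffices to compute $a_n(A)$ for the finite abelian $p$-group $A = \Z/p^a \times \Z/p^b$ and show it meets the stated bounds. This reduces everything to an explicit calculation with a single concrete group.

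First I would compute the Zassenhaus (= modular dimension) filtration of $A$ using Lazard's theorem (Theorem~\ref{lazard}). Since $A$ is abelian, all $\gamma_i(A)$ vanish for $i \geq 2$, so Lazard's formula collapses to $A_n = \prod_{p^j \geq n} A^{p^j} = A^{p^{\lceil \log_p n \rceil}}$; equivalently $A_n = p^j A$ where $j$ is the least integer with $p^j \geq n$. Thus the filtration is constant on each block $p^{j-1} < n \leq p^j$ and only jumps when $n$ crosses a power of $p$: $A_n / A_{n+1}$ is nonzero exactly when $n = p^c$ for some $c \geq 0$, in which case $A_{p^c}/A_{p^c+1} = p^{c}A / p^{c+1}A$. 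Then I would count: for $\Z/p^a$ with $a \leq b$, the quotient $p^c(\Z/p^a)/p^{c+1}(\Z/p^a)$ has dimension $1$ for $0 \leq c \leq a-1$ and $0$ otherwise, and similarly for the $\Z/p^b$ factor. Adding the two contributions, $a_{p^c}(A) = 2$ for $0 \leq c \leq a-1$ and $a_{p^c}(A) = 1$ for $a \leq c \leq b-1$, which is exactly the claimed inequality after invoking $a_n(G) \geq a_n(A)$.

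The only genuinely delicate point is getting the indexing of Lazard's formula right — in particular checking that $A_n$ depends on $n$ only through $\lceil \log_p n\rceil$ and that the jump at $n = p^c$ is "downward by one step in the $p$-power filtration of each cyclic factor." This is routine but easy to misstate; I would verify it by noting $ip^j \geq n$ with $i \geq 1$ forces (for abelian $A$, where only $i=1$ contributes) $p^j \geq n$, so the smallest admissible $j$ is $\lceil \log_p n \rceil$, and among all such pairs $\gamma_1(A)^{p^j} = p^j A$ is smallest for the largest $j$ — but the product is over all admissible $(i,j)$, so $A_n = p^{j_{\min}}A$ where $j_{\min} = \lceil \log_p n\rceil$. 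With this pinned down the rest is bookkeeping. I do not expect any obstacle beyond this; the proposition is essentially the observation that an interesting $p$-tower group surjects onto its abelianization and the abelianization's dimension factors are forced by its type.
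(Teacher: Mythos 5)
Your proof is correct and follows essentially the same route as the paper: apply Corollary~\ref{GmodH} to the surjection $G\twoheadrightarrow G^{ab}$ and then compute the dimension factors of $\Z/p^a\Z\oplus\Z/p^b\Z$ via Lazard's formula, which collapses for abelian groups. Your indexing $A_n=A^{p^{\lceil \log_p n\rceil}}$ is in fact the correct form (the paper writes $\lfloor\cdot\rfloor$, a harmless slip since only $n=p^c$ matters), and the rest of your bookkeeping matches the paper's.
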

\begin{proof}
We apply Corollary \ref{GmodH} to the surjection $G\ra
G^{ab}\iso\Z/p^a\Z\oplus \Z/p^b\Z$.  For any abelian group $H$, we
have $\gamma_i(H)=1$ for $i\geq 2$, and so the Lazard product formula
for $H_n$ reduces to
\begin{align*}
H_n=\prod_{p^j\geq n}\gamma_1(H)^{p^j}=H^{p^{\lfloor \log_pn\rfloor}}
\end{align*}
In particular $H_n=H_{n+1}$ unless $n$ is a power of $p$, so only
dimension factors with $p$-power indices $p^c$ can be non-trivial.
Applying this to $G^{\text{ab}}\iso \Z/p^a\Z\oplus \Z/p^b\Z$ (written
additively), we have
\begin{align*}
a_{p^c}=\dim_{\F_p}G^{\text{ab}}_{p^c}/G^{\text{ab}}_{p^c+1}=\dim_{\F_p}\left[\frac{p^c\Z/p^a\Z}{p^{c+1}\Z/p^a\Z}\oplus \frac{p^c\Z/p^b\Z}{p^{c+1}\Z/p^b\Z}\right].
\end{align*}
The first factor is non-trivial only for $0\leq c\leq a-1$ and the
second factor is non-trivial only for $0\leq c\leq b-1$, giving the
result.
\end{proof}

For an upper bound, we relate the dimension factors of a group $G$ to
the (more easily calculable) mod $p$ quotients of its lower central
factors.  Define
\begin{align*}
g_n(G):=\dim_{\F_p}\frac{\gamma_n(G)}{\gamma_n(G)^p\gamma_{n+1}}.
\end{align*}
The relation to the dimension factors is then given by
\begin{Lemma}\label{surj}
For any finitely-generated pro-$p$-group $G$, we have $a_n(G)\leq
g_n(G)$ for all $n<p-1$.
\end{Lemma}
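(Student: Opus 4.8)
The goal is to compare $a_n(G)=\dim_{\F_p}G_n/G_{n+1}$ with $g_n(G)=\dim_{\F_p}\gamma_n(G)/\gamma_n(G)^p\gamma_{n+1}(G)$ in the range $n<p-1$. The key is that Lazard's product formula (Theorem \ref{lazard}) collapses in low degrees. For $n<p-1$ and $p^j\geq n$ with $j\geq 1$, we need $i p^j\geq n$, which for $j=1$ forces $i\geq n/p$, i.e.\ $i\geq 1$ once $n\leq p$; but more importantly the \emph{only} way a factor $\gamma_i(G)^{p^j}$ with $j\geq 1$ contributes to $G_n$ but not to $G_{n+1}$ is if $ip^j=n$ exactly, and for $n<p-1$ the smallest such value of $ip^j$ with $j\geq 1$ is $p$ (taking $i=1$, $j=1$), which is already $\geq p-1>n$. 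So in the range $n<p-1$ no proper $p$-th-power term interferes with the difference between level $n$ and level $n+1$, except possibly the term $\gamma_n(G)^p$ itself compared against $\gamma_{n+1}(G)$-type terms.

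The plan is therefore: first I would write out $G_n=\gamma_n(G)\cdot(\text{stuff landing in }\gamma_{n+1}\text{-or-deeper or }p\text{-power-deeper})$, and argue that for $n<p-1$ one has $G_n=\gamma_n(G)\,G_{n+1}$ and $G_{n+1}\supseteq \gamma_n(G)^p\gamma_{n+1}(G)$. The first identity says every generator of $G_n$ beyond $\gamma_n(G)$ already lies in $G_{n+1}$; the second says $\gamma_n(G)^p$ and $\gamma_{n+1}(G)$ both lie in $G_{n+1}$ — indeed $\gamma_{n+1}(G)\subseteq G_{n+1}$ trivially from the Lazard formula (take $i=n+1$, $j=0$), and $\gamma_n(G)^p\subseteq G_{np}\subseteq G_{n+1}$ since $np\geq n+1$. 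Granting these, there is a natural surjection
\begin{align*}
\frac{\gamma_n(G)}{\gamma_n(G)^p\gamma_{n+1}(G)}\longrightarrow \frac{\gamma_n(G)\,G_{n+1}}{G_{n+1}}=\frac{G_n}{G_{n+1}},
\end{align*}
since the denominator on the left maps into $G_{n+1}$, and taking $\F_p$-dimensions gives $g_n(G)\geq a_n(G)$.

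The main obstacle — and the only place the hypothesis $n<p-1$ is truly used — is establishing $G_n=\gamma_n(G)\,G_{n+1}$, i.e.\ that the extra factors $\gamma_i(G)^{p^j}$ with $j\geq 1$ appearing in Lazard's formula for $G_n$ all already lie in $G_{n+1}$ when $n<p-1$. I would handle this by the numerical observation above: such a factor contributes to $G_n$ only if $ip^j\geq n$ with $j\geq 1$, hence $ip^j\geq i p\geq p>n$ when $n<p-1$ (as $i\geq 1$); but then $ip^j\geq p\geq n+1$, so in fact $\gamma_i(G)^{p^j}\subseteq G_{ip^j}\subseteq G_{n+1}$. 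Thus the only $j=0$ term surviving is $\gamma_n(G)$ itself (terms $\gamma_i(G)$ with $i>n$ lie in $\gamma_{n+1}(G)\subseteq G_{n+1}$), proving $G_n=\gamma_n(G)\,G_{n+1}$. Assembling the surjection and comparing dimensions then completes the argument; all of this is routine once the degree bookkeeping is in place.
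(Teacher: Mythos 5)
Your proof is correct and follows essentially the same route as the paper's: both use Lazard's formula to collapse the product in the range $n<p-1$ and then exhibit a surjection $\gamma_n/\gamma_n^p\gamma_{n+1}\twoheadrightarrow G_n/G_{n+1}$. The only (cosmetic) difference is that you write $G_n=\gamma_n\,G_{n+1}$ and check $\gamma_n^p\gamma_{n+1}\subseteq G_{n+1}$ directly, whereas the paper writes $G_n=G^p\gamma_n$ and $G_{n+1}=G^p\gamma_{n+1}$ and compares those.
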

\begin{proof}
Write $\gamma_i$ for $\gamma_i(G)$.  It suffices to demonstrate a
surjection of $\F_p$ vector spaces $\gamma_n/\gamma_n^p\gamma_{n+1}\ra
G_n/G_{n+1}$.  The assumption that $p>n$ renders most of the terms in
Lazard's product formula for $G_n$ redundant.  Namely, noting the
inclusions $\gamma_i\leq \gamma_j$ for $i\geq j$ and
$\gamma_i^{p^j}\leq \gamma_i^{p^k}$ for $j\geq k$, we claim that the
product simplifies to
\begin{align*}
G_n=\prod_{ip^j\geq n}\gamma_i^{p^j}=G^p \gamma_n.
\end{align*}
To see this, observe that any factor in the product must either have
$i\geq n$, in which case that factor is contained in $\gamma_n$, or
$j\geq 1$, in which case the factor is contained in $\gamma_1^p=G^p$.
Similarly, since $p>n+1$, repeating the argument gives
$G_{n+1}=G^p\gamma_{n+1}$.  Now the kernel of the natural quotient map
\begin{align*}
\xymatrix@1{
\gamma_n\ar@{->>}[r]& \frac{\gamma_n}{(G^p\cap\gamma_n)\gamma_{n+1}}\cong\frac{G^p\gamma_n}{G^p\gamma_{n+1}}=\frac{G_n}{G_{n+1}}
}\end{align*}
clearly contains $\gamma_n^p\gamma_{n+1}$, giving the desired surjection.
\end{proof}

This result in hand, we now recall that any interesting $p$-tower
group $G$ admits a presentation $F/\<\rho_1,\rho_2\>$ where $\rho_1$
is of level 3 with respect to the Zassenhaus filtration and $\rho_2$
is of level $i$ for $i\in\{3,5,7\}$.  Regardless of the level of
$\rho_2$, $G$ is thus a quotient of the one-relator pro-$p$-group
$\widetilde{G}:=F/\<\rho_1\>$ whose single relation lies in level 3.
Such groups were studied extensively by Labute, especially in regard
to their lower central series.  Important to our upper bound will be
his calculation of the lower central factors of a $d$-generated
one-relator pro-$p$-group with one relation in level $k$
(\cite{La70}):
\begin{align*}
g_n(\widetilde{G})=\frac{1}{n}\sum_{j\mid n}\mu\left(\frac{n}{j}\right)\left[\sum_{0\leq i\leq \lfloor\frac{j}{k}\rfloor}(-1)^i\frac{j}{j+(1-k)i}\binom{j+(1-k)i}{i}d^{j-ki}\right],
\end{align*}
where $\mu$ denotes the Moebius function.  For an interesting
$p$-tower group $G$, combining Corollary \ref{GmodH} (applied to the
natural surjection $\widetilde{G}\ra G$) and Lemma \ref{surj} gives
the chain of inequalities $a_n(G)\leq a_n(\widetilde{G})\leq
g_n(\widetilde{G})$, proving the following proposition.

\begin{Prop}\label{an}
Let $G$ be an interesting $p$-tower group.  Then for $p>n+1$ we have
\begin{align*}
a_n(G)\leq \frac{1}{n}\sum_{j\mid n}\mu\left(\frac{n}{j}\right)\left[\sum_{0\leq i\leq \lfloor\frac{j}{3}\rfloor}(-1)^i\frac{j}{j-2i}\binom{j-2i}{i}2^{j-3i}\right]
\end{align*}
For $p>7$, this gives the following table of upper bounds for the
first few dimension factors of an interesting $p$-tower groups:
\begin{align*}
\begin{tabular}{|c|c|c|c|c|c|c|c|c|c|}\hline
$n=$&1&2&3&4&5&6&7&8&9\\\hline
$a_n\leq$&2&1&1&1&2&2&4&5&8\\\hline
\end{tabular}
\end{align*}
\end{Prop}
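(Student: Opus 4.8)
The plan is essentially to chain together the inequalities already assembled in the paragraph preceding the statement and then specialize Labute's formula; the only mildly delicate points are bookkeeping. In detail: by Theorem \ref{KV} every interesting $p$-tower group $G$ has $m_1=3$, hence admits a minimal presentation $F/\langle\rho_1,\rho_2\rangle$ in which the relation $\rho_1$ has level $3$. Forgetting $\rho_2$ produces a surjection $\widetilde G:=F/\langle\rho_1\rangle\twoheadrightarrow G$ from the $2$-generated one-relator pro-$p$-group $\widetilde G$ whose single defining relation has level $3$. First I would apply Corollary \ref{GmodH} to this surjection to get $a_n(G)\le a_n(\widetilde G)$ for every $n\ge 1$, and then Lemma \ref{surj} to $\widetilde G$ to get $a_n(\widetilde G)\le g_n(\widetilde G)$ whenever $n<p-1$. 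Combining these gives $a_n(G)\le g_n(\widetilde G)$ exactly in the range $p>n+1$ asserted by the proposition.

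Next I would substitute into the right-hand side Labute's formula (\cite{La70}) for $g_n$ of a $d$-generated one-relator pro-$p$-group with a single relation of level $k$, specialized to $d=2$ and $k=3$. Since $1-k=-2$, the quantity $j+(1-k)i$ becomes $j-2i$ and $d^{\,j-ki}$ becomes $2^{\,j-3i}$, so the expression collapses verbatim to the displayed bound; this establishes the inequality.

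For the table one simply evaluates the formula at $n=1,\dots,9$. Abbreviating the inner bracket by $W_j$, a short computation (each $W_j$ has at most $\lfloor j/3\rfloor+1$ terms, and $\sum_j W_jt^j=(2t-3t^3)/(1-2t+t^3)$) gives $W_j=2,4,5,8,12,19,30,48,77$ for $j=1,\dots,9$, whereupon the Möbius inversion $a_n\le\frac1n\sum_{j\mid n}\mu(n/j)W_j$ returns the claimed values $2,1,1,1,2,2,4,5,8$. The stated hypothesis $p>7$ forces $p\ge 11$ since $p$ is an odd prime, so $p>n+1$ holds for every $n\le 9$ and Lemma \ref{surj} is applicable throughout the table. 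There is no genuine obstacle here: the ``hard part'' is only the elementary prime-gap remark ensuring the hypothesis $n<p-1$ and the routine Möbius-inversion arithmetic behind the table.
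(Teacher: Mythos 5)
Your proposal is correct and follows exactly the paper's argument: pass to the one-relator quotient $\widetilde G=F/\langle\rho_1\rangle$ with its level-$3$ relation, chain $a_n(G)\leq a_n(\widetilde G)\leq g_n(\widetilde G)$ via Corollary \ref{GmodH} and Lemma \ref{surj}, and specialize Labute's formula to $d=2$, $k=3$. Your explicit verification of the table (the values $W_j=2,4,5,8,12,19,30,48,77$ and the Möbius inversion) checks out and merely fills in arithmetic the paper leaves implicit.
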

\noindent We can refine this slightly for groups of $Z$-type $(3,7)$.
\begin{Lemma}\label{a7}
For $p>7$ and an interesting $p$-tower group $G$ of $Z$-type $(3,7)$,
we have $a_7(G)\leq 3$.
\end{Lemma}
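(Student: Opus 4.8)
The plan is to strengthen the generic bound $a_7(G)\le 4$ from Proposition \ref{an} by using the extra relation of level $7$ that is present for groups of Z-type $(3,7)$, rather than only the level-$3$ relation used to define $\widetilde{G}$. The key observation is that the chain of inequalities $a_n(G)\le a_n(\widetilde G)\le g_n(\widetilde G)$ in Proposition \ref{an} throws away the second relation entirely; but $G$ is actually a quotient of $F/\langle\rho_1\rangle$ by the image of $\rho_2$, a relation lying in $F_7$. Passing to the level-$7$ graded piece should cost us (at least) one dimension.

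First I would make this precise by working in the graded Lie algebra $\gr(\widetilde G)=\bigoplus_n \gamma_n(\widetilde G)/\gamma_n(\widetilde G)^p\gamma_{n+1}$ (or its characteristic-$0$ analogue, then reducing mod $p$), which by Labute's theory is the quotient of the free restricted Lie algebra on $d=2$ generators by the ideal generated by the initial form $\bar\rho_1$ of $\rho_1$ in degree $3$. Adjoining $\rho_2$ amounts to further quotienting by the ideal generated by the degree-$7$ initial form $\bar\rho_2$; since the generating system of relations was chosen minimally with respect to the Zassenhaus filtration, $\bar\rho_2$ is a nonzero element of the degree-$7$ part of $\gr(\widetilde G)$ (if it were zero, $\rho_2$ would lie in $F_8$, contradicting that the group has Z-type $(3,7)$ and that $R_6$ together with $\rho_2$ minimally generates $RF_8/F_8$). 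Quotienting a graded Lie algebra by the ideal generated by one nonzero homogeneous element of degree $7$ strictly decreases the degree-$7$ component, so $a_7(G)\le a_7(\widetilde G)-1 \le g_7(\widetilde G)-1$, and plugging $n=7$, $d=2$ into Labute's formula (which gave $g_7(\widetilde G)=4$ in the table) yields $a_7(G)\le 3$. I would also have to invoke $p>7$ so that Lemma \ref{surj} applies at $n=7$ and so that the restricted-Lie-algebra $p$-operation does not interfere in degrees $\le 7$.

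The step I expect to be the main obstacle is justifying cleanly that adjoining the single degree-$7$ relation $\bar\rho_2$ genuinely drops $a_7$ by at least one — i.e., that $\bar\rho_2$ is not already in the ideal generated by $\bar\rho_1$ inside $\gr(\widetilde G)$ in degree $7$. A priori the minimality of the presentation for $G$ guarantees $\bar\rho_2 \notin \bar\rho_1$-ideal \emph{in the associated graded of $F$}, but one must check this survives the passage to $\gr(\widetilde G)$ and, after reduction mod $p$, to the mod-$p$ lower central quotients $g_n$; the $p$-power operations and the distinction between $\gamma_n/\gamma_{n+1}$ and $\gamma_n/\gamma_n^p\gamma_{n+1}$ are exactly where such arguments tend to go wrong, which is presumably why the hypothesis $p>7$ is imposed. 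An alternative, possibly cleaner route is to argue entirely inside $\F_p[G]$: the level-$7$ relation contributes an extra dimension to the image of $J_8$ (in the notation of Theorem \ref{recrel}) beyond what the level-$3$ relation forces, so that $c_8$ for $G$ is strictly smaller than $c_8$ for $\widetilde G$, and then translate back via Jennings' theorem (Theorem \ref{Jenn}); but bookkeeping the $P_n(t)$-factors to isolate the $a_7$ coefficient is itself delicate, so I would try the graded-Lie-algebra argument first and fall back on this only if the minimality-survives-reduction step proves intractable.
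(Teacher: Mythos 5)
Your approach is essentially the paper's: the level-$7$ relation $\rho_2$ must kill one further dimension of the degree-$7$ graded piece beyond what $\rho_1$ accounts for, giving $a_7(G)\le a_7(\widetilde{G})-1\le g_7(\widetilde{G})-1=3$. The step you flag as the main obstacle is handled in the paper without descending into $\gr(\widetilde{G})$ at all: one compares the two exact sequences $0\ra \<\rho_1\>/(\<\rho_1\>\cap F_8)\ra F_7/F_8\ra \widetilde{G}_7/\widetilde{G}_8\ra 0$ and $0\ra R/(R\cap F_8)\ra F_7/F_8\ra G_7/G_8\ra 0$, and the minimality of $\{\rho_1,\rho_2\}$ as a generating system for $RF_8/F_8$ says precisely that $\rho_2\notin\<\rho_1\>F_8$, so the second kernel is at least one dimension larger than the first. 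The restricted $p$-power operations you worry about never enter at this stage; the hypothesis $p>7$ is used only where it was already used in Proposition \ref{an}, namely via Lemma \ref{surj} to obtain $a_7(\widetilde{G})\le g_7(\widetilde{G})=4$.
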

\begin{proof}
Consider a minimal presentation $1\ra R\ra F\ra G\ra 1$, and choose
generators $\rho_1,\rho_2$ for $R$ of respective levels 3 and 7.  Let
$\widetilde{G}=F/\<\rho_1\>$.  We have the commutative diagram
\begin{align*}
\xymatrix{
0\ar[r]&\<\rho_1\>/(\<\rho_1\>\cap F_8)\ar@{_(->}[d]\ar[r]&F_7/F_8\ar@{=}[d]\ar[r]&\widetilde{G}_7/\widetilde{G}_8\ar@{->>}[d]\ar[r]&0\\
0\ar[r]&R/(R\cap F_8)\ar[r]&F_7/F_8\ar[r]&G_7/G_8\ar[r]&0
}
\end{align*}
of finite-dimensional $\F_p$-vector spaces.  By assumption that
$\rho_1$ and $\rho_2$ form a minimal system of generators for
$RF_8/F_8\iso R/(R\cap F_8)$ (in particular, since $\rho_2\notin
\<\rho_1\>\cap F_8$), we have $\dim_{\F_p}R/(R\cap
F_8)=\dim_{\F_p}\<\rho_1\>/(\<\rho_1\>\cap F_8)+1$.  This then gives
$a_7(G)\leq a_7(\widetilde{G})-1= 3$ by the previous proposition.
\end{proof}

Finally, we return to the implications of the Golod-Shafarevich
equality for interesting $p$-tower groups of Z-type $(3,7)$.  A key
motivation is that the polynomial $t^7+t^3-2t+1$ has a minimum value
of about $0.02$ on the unit interval, implying that the
Golod-Shafarevich inequality \emph{nearly} prohibits analytic groups
of this $Z$-type from occurring.  While the Golod-Shafarevich results
do not rule out the existence of such groups, we instead obtain a
rather large lower bound on their orders.

\begin{Theo}\label{cardbound}
Let $p>7$, and suppose $G$ is a pro-$p$-group of $Z$-type $(3,7)$
whose abelianization is of type $(p^a,p^b)$ with $1\leq a\leq b$.
Then $|G|\geq p^{21+a+b}\geq p^{23}$.  
\end{Theo}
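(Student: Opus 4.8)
The plan is to turn the bound $|G|\geq p^{21+a+b}$ into a lower bound on $\sum_{n\geq 1}a_n(G)$ and then to assemble that sum from three disjoint blocks of indices: the powers of $p$, the levels $2,\dots,7$, and the remaining (non-$p$-power) levels $n\geq 8$. The first step is to observe that for a finite $p$-group, evaluating Jennings' theorem (Theorem \ref{Jenn}) at $t=1$ gives $|G|=\sum_n b_n=\prod_n P_n(1)^{-a_n}=\prod_n p^{a_n}=p^{\sum_n a_n(G)}$, so it suffices to prove $\sum_n a_n(G)\geq 21+a+b$. I would then apply Proposition \ref{lowbound} to the surjection $G\twoheadrightarrow G^{\mathrm{ab}}\cong \Z/p^a\Z\oplus\Z/p^b\Z$: the powers of $p$ contribute at least $2a+(b-a)=a+b$ to $\sum_n a_n(G)$, and this count already includes $a_1=2$; hence the powers of $p$ strictly greater than $1$ (i.e.\ $p,p^2,\dots,p^{b-1}$, all of which are $\geq 8$ since $p\geq 11$) contribute at least $a+b-2$.

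Next I would pin down the low-level dimension factors exactly. Since $G$ has a minimal presentation $F/\langle\langle \rho_1,\rho_2\rangle\rangle$ with $\rho_1$ of level $3$ and $\rho_2$ of level $7$, the normal closure of $\rho_2$ lies in $F_7$, so $G_n/G_{n+1}=\widetilde G_n/\widetilde G_{n+1}$ for $n\leq 6$, where $\widetilde G=F/\langle\langle\rho_1\rangle\rangle$ is the one-relator pro-$p$-group with a single level-$3$ relation (and for $n\leq 2$ these agree with the free pro-$p$-group). Because $p>7$, Labute's formula as used in Proposition \ref{an} computes these exactly (the relation being strongly free, so the bound of Lemma \ref{surj} is an equality for $\widetilde G$ in this range), and Lemma \ref{a7} together with the fact that the extra level-$7$ relation strips off exactly one dimension in level $7$ gives $a_7(G)=3$. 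Thus $(a_1,\dots,a_7)=(2,1,1,1,2,2,3)$; as $p\geq 11$, none of $2,\dots,7$ is a power of $p$, so these levels add $a_2+\dots+a_7=10$ beyond the $a_1=2$ already accounted for. It then remains only to show that the non-$p$-power levels $n\geq 8$ contribute at least $11$, since $2+10+(a+b-2)+11=21+a+b$.

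For that final block I would feed the now-known values $a_1,\dots,a_7$ into the Golod-Shafarevich equality (Theorem \ref{GSEQ}) for $G$, noting that here $d=r=2$ and $r_3=r_7=1$, so the left-hand side is exactly $t^7+t^3-2t+1$; solving for $\prod_{n\geq 8}P_n(t)^{a_n}$ expresses it through $(t^7+t^3-2t+1)/\prod_{n=1}^7 P_n(t)^{a_n}$ minus a strictly positive error term. Since $G$ is finite the error term $\sum e_nt^n/\sum c_nt^n$ is not merely nonnegative but, by Corollary \ref{strict}, is at least $(1-|G|^{-1})t^{N+7}$ (and one checks from the recursion that $e_n=0$ for $n\leq 7$). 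Analysing this as $t\to 1^-$, where $P_n(1^-)=1/p$ and the first-order behaviour of $\prod_{n=1}^7 P_n(t)^{a_n}$ is controlled by $\sum_{n\leq 7}a_n=12$ and $\sum_{n\leq 7}n\,a_n=54$, and combining with the upper bounds of Proposition \ref{an} on $a_8$ and $a_9$ to kill off the borderline cases, forces $\sum_{n\geq 8,\ n\neq p^c}a_n\geq 11$. Collecting the three blocks yields $\sum_n a_n(G)\geq 21+a+b$, hence $|G|\geq p^{21+a+b}$, and $a,b\geq 1$ gives $|G|\geq p^{23}$. I expect this last step to be the genuine obstacle: the polynomial $t^7+t^3-2t+1$ has minimum only about $0.02$, so the Golod-Shafarevich inequality is extremely tight for this Z-type, and extracting the exact constant $11$ demands careful bookkeeping of the sub-leading terms in the equality near $t=1$ rather than any crude use of positivity.
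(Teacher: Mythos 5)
Your overall architecture (convert $|G|$ to $p^{\sum a_n}$ via Jennings, harvest $a+b$ from the $p$-power levels via Proposition \ref{lowbound}, and account for $21$ more from the low and mid levels) matches the paper's target numbers, but two of your three blocks rest on steps that do not go through as described.

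First, your middle block requires the \emph{exact} values $(a_2,\dots,a_7)=(1,1,1,2,2,3)$, i.e.\ lower bounds matching the upper bounds of Proposition \ref{an}. The paper never proves, and never needs, any such lower bounds: Lemma \ref{surj} and Proposition \ref{an} are one-sided ($a_n\leq g_n$), and your appeal to the relation being ``strongly free'' so that the bound becomes an equality is an unproved assertion that would need its own argument (a degree-by-degree analysis of the relation ideal of $\widetilde{G}$ in $\F_p(d)$). The paper sidesteps this entirely with a rearrangement argument: since $(1-t^m)\geq(1-t^n)$ for $m\geq n$, among all dimension-factor sequences with a fixed sum $A$ the one concentrated at the lowest admissible indices minimizes $\prod(1-t^n)^{a_n}$; so if even that extremal configuration violates $t^7+t^3-2t+1>\prod P_n(t)^{a_n}\geq(1-t)^2\prod_{n\geq 2}(1-t^n)^{a_n}$ at some $t$, every configuration with sum $A$ does, and $A$ must increase. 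If the true $a_2,\dots,a_7$ were smaller than their caps, the deficit is automatically forced into higher levels; your rigid three-block split has no such compensating mechanism.

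Second, and more seriously, your final step cannot be executed at $t\to 1^-$. There the Golod--Shafarevich equality degenerates into the tautology $1=\frac{1}{|G|}+\frac{|G|-1}{|G|}$ (since $P_n(1^-)=1/p$, $\sum e_nt^n/\sum c_nt^n\to (|G|-1)/|G|$), and the first-order correction involves the full sequences $e_n$, $c_n$ and the huge exponent $N=(p-1)\sum na_n$, none of which you control; likewise $(1-\frac{1}{|G|})t^{N+7}$ from Corollary \ref{strict} is useless for $t$ bounded away from $1$ and carries no information at $t=1$. The constraint actually binds at intermediate $t$, where $t^7+t^3-2t+1$ dips to about $0.02$ (near $t\approx 0.67$). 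The paper's proof gets the constant $11$ (really, the total $21$) by evaluating the displayed inequality at such points --- e.g.\ checking that with $a_9=5$ the inequality fails at $t=0.55$ while with $a_9=6$ it holds on all of $(0,1)$ --- and incrementing the lowest unsaturated dimension factor until the inequality is satisfiable, terminating at the sequence $2,1,1,1,2,2,3,5,6$. You correctly identify this step as ``the genuine obstacle,'' but the route you propose for it (expansion at $t=1$) is the one place where the equality carries no information.
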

\begin{proof}
Using that $e_n\geq 0$ for all $n$, the Golod-Shafarevich equality implies that
\begin{align*}
t^7+t^3-2t+1>\prod_{n=1}^\infty P_n(t)^{a_n}
\end{align*}
for all $t\in(0,1)$.  We abbreviate $P_n(t)$ by $P_n$, and begin by
breaking up the right-hand product (making use of Proposition
\ref{lowbound} and that $a_1=d=2$ in the process):
\begin{align*}
\prod_{n=1}^\infty P_n^{a_n}=P_1^2P_2^{a_2}\cdots
P_9^{a_9}\cdot\prod_{c=1}^{a-1}P_{p^c}^2\cdot\prod_{c=a}^{a+b-1}P_{p^c}\cdot\prod'P_n^{a_n}
\end{align*}
where the primed product at the end consists of all terms not
explicitly pulled out in one of the other displayed factors.  (The
reason for specifically pulling out the first nine terms will become
clear by the end of the proof).  The products with $p$-power indices
now telescope to give
\begin{align*}
\prod_{n=1}^\infty
P_n^{a_n}&=\left(\frac{1-t}{1-t^p}\right)^2P_2^{a_2}\cdots
P_9^{a_9}\cdot\left(\frac{(1-t^p)^2}{(1-t^{p^a})(1-t^{p^b})}\right)\cdot\prod'P_n^{a_n}\\
&\geq (1-t)^2(1-t^2)^{a_2}\cdots(1-t^9)^{a_9}\prod'(1-t^n)^{a_n}.
\end{align*}
Since $|G|=p^{\sum a_n}$, we now search for the sequence
$a_2,a_3,\ldots$ of dimension factors which gives the smallest value
of $A:=\sum a_n$ subject to this last inequality and whose terms
satisfy the constraints of Proposition \ref{an} and Lemma \ref{a7}.
Since $(1-t^m)\geq (1-t^n)$ on the unit interval whenever $m\geq n$,
we note that of all the sequences which sum to $A$, the one with the
smallest value of $\prod (1-t^n)^{a_n}$ is the one with $a_1=A$,
$a_i=0$ for $i\geq 2$.  Elementary calculus or a graphing calculator
shows that for $A=2$, the inequality $t^7+t^3-2t+1>(1-t)^2$ is
violated at $t=.5$.  Thus we must have $A>2$, but now the restriction
that $a_1\leq 2$ implies that the minimal possible solution occurs
when $a_1=2$, $a_2=A-2$.  A similar argument rules out $A=3$, and
since $a_2\leq 1$, the minimal solution must have $a_3$ also
non-trivial.  We now repeat, incrementing the next smallest dimension
factor until either the Golod-Shafarevich equality is satisfied, or we
reach the upper bound on that factor prescribed by Proposition
\ref{an} or Lemma \ref{a7}.  The process terminates after incrementing
$a_9$ to 6, which one can verify by calculating that
\begin{align*}
t^7+t^3-2t+1>(1-t)^2(1-t^2)(1-t^3)(1-t^4)(1-t^5)^2(1-t^6)^2(1-t^7)^3(1-t^8)^5(1-t^9)^5
\end{align*}
is violated for $t=.55$, whereas the analogous inequality with $a_9=6$
holds for all $t\in(0,1)$.  Recalling that this sequence of dimension
factors (including those of $p$-power index dealt with earlier in the
proof) was the sequence which gave the \emph{minimal} possible value
of $A$, we conclude that for any interesting $p$-tower group of
$Z$-type $(3,7)$, we have
\begin{align*}
\sum_{n=1}^\infty a_n\geq 2+1+1+1+2+2+3+5+6+2(a-1)+(b-a)=21+a+b.
\end{align*}
\end{proof}
\begin{Remark}
The explicit bound $|G|\geq p^{23}$ might suggest that to find
candidates for quadratic imaginary number fields whose $p$-tower group
is of $Z$-type (3,7), it would be prudent to search for fields with
very large $p$-class groups (but still, of course, with $p$-rank 2).
The bound $|G|\geq p^{21+a+b}$, however, shows that this is not the
case.  In particular, since we have assumed that $|G^{ab}|=p^{a+b}$,
the theorem implies (via the inequality $|G'|\geq p^{21}$) that it is
the commutator subgroups which contains the bulk of this newfound
size.
\end{Remark}

\section{Concluding Remarks}

The proof of Theorem \ref{cardbound} suggests that we can hope to
better understand finite groups of $Z$-type (3,7) by considering
abstract sequences of invariants which conform to the bounds given by
the various results on such a group's dimension factors.  Namely, for
a sequence $a_1,a_2,\ldots$ of non-negative integers (and a fixed
prime $p$), we could define invariants $b_n$, $c_n$, and $e_n$ for
$n\geq 1$ by mirroring their definitions as found in the text:
\begin{align*}
\sum_{n=0}^\infty b_nt^n=\prod_{n=1}^\infty P_n(t)^{a_n},\quad\quad c_n=b_{n+1}-b_n,\quad\end{align*}
\begin{align*}
\quad e_n=c_n-2c_{n-1}+c_{n-3}+c_{n-7}-1.
\end{align*}
We call the original sequence $a_n$ \emph{valid} if its terms satisfy
the bounds given by Proposition \ref{an} and Lemma \ref{a7}, and if
the corresponding sequences $c_n$ and $e_n$ are non-negative and
stabilize for sufficiently large $n$.  Certainly a necessary condition
for the existence a finite $p$-group of Z-type (3,7) is the existence
of such a valid sequence.  While it may be tempting to view Theorem
\ref{cardbound} as a first step toward proving the non-existence of
such a group, the following example, found in collaboration with Ray
Puzio, shows that this interpretation is premature.

\begin{Exam}
For $p=17$, the sequence
\begin{align*}
\{a_n\}_{n=1}^{15}=\{2,1,1,1,2,2,3,3,4,4,6,5,7,5,4\}.
\end{align*}
is valid.
\end{Exam}
It seems difficult, at the present, to determine whether or not this
series occurs as the sequence of dimension factors of an interesting
$p$-tower group, though we note that by Proposition \ref{GmodH} the
abelianization of such a group would necessarily be isomorphic to
$\Z/17\Z\oplus\Z/17\Z$.  Further, by summing the sequence, we see that
such a group would have order $17^{50}$, well beyond the bound
guaranteed by Theorem \ref{cardbound} (and larger than the monster
group!).

Finally, we wish to remark on a possible alternate interpretation for
the sequence of invariants $e_n$ appearing in the Golod-Shafarevich
equality.  A result of Labute (\cite{La06}, Theorem 5.1g) shows that
for a mild pro-$p$-group, one has
\begin{align*}
\sum r_kt^k-dt+1=\sum b_nt^n,
\end{align*}
which after applying Jennings' theorem to the right-hand side, is
precisely the Golod-Shafarevich inequality in the case that $e_n=0$
for all $n$.  This suggests a further interpretation of the $e_n$ as a
measure of the non-mildness of a pro-$p$-group $G$.  As a toy example
of this interpretation, the fact that for a finite $p$-group $G$ we
have $e_n=|G|-1\neq 0$ for all sufficiently large $n$ (see the
discussion after Remark \ref{Kochform}) might suggest that finite
$p$-groups are ``highly non-mild.''

\section*{Acknowledgments}

The author would like to thank William McCallum for his guidance and
support, Ray Puzio for his insights on recursion relations, and Kirti
Joshi, Dinesh Thakur, and Klaus Lux for endless valuable conversations.

\end{document}